\newtheorem{thm}{Theorem}[section]
\newtheorem{theorem}[thm]{Theorem}
\newtheorem{prop}[thm]{Proposition}
\newtheorem{lemma}[thm]{Lemma}
\newtheorem*{lemma*}{Lemma}
\newtheorem{cor}[thm]{Corollary}
\newtheorem{remark}[thm]{Remark}
\newcommand{\Q}{\mathbb{Q}}
\newcommand{\N}{\mathbb{N}}
\newcommand{\F}{\mathbb{F}}
\newcommand{\C}{\mathbb{C}}
\newcommand{\res}{\underset{s=1}{\text{Res }}}
\numberwithin{equation}{section}
\title{Enumerating $D_4$ quartics and a galois group bias over function fields} 
\date{\today}
\author{Daniel Keliher}
\address{Department of Mathematics, Tufts University, 503 Boston Ave,
	Medford, MA 02144}
\email{daniel.keliher@tufts.edu}
\begin{document}
\maketitle

\begin{abstract}
We give an asymptotic formula for the number of $D_4$ quartic extensions of a function field with discriminant equal to some bound, essentially reproducing the analogous result over number fields due Cohen, Diaz y Diaz, and Olivier, but with a stronger error term. We also study the relative density of $D_4$ and $S_4$ quartic extensions of a function field and show that with mild conditions, the number of $D_4$ quartic extensions can far exceed the number of $S_4$ quartic extensions. 
\end{abstract}

\section{Introduction}

If $F$ is a number field, the number of $D_4$ and $S_4$ quartic extensions of $F$ with bounded discriminant is understood by work of Cohen, Diaz y Diaz, and Olivier \cite{CDO}, and of Bhargava, Shankar and Wang \cite{BSW}, respectively. In recent work, Friedrichsen and the author \cite{FK} study the relative sizes of these quantities and prove that $100\%$ of quadratic number fields have arbitrarily many more $D_4$ quartic than $S_4$ quartic extensions. 

In this note we seek to recover the results of \cite{FK} but over function fields. The result of Bhargava, Shankar, and Wang counting $S_4$ quartic extensions still applies in the function field setting. For counting $D_4$ extensions, the work of Cohen, Diaz y Diaz, and Olivier, while expected to generalize, has hitherto been stated only for number fields.

As such, our first task is to enumerate $D_4$ quartic extensions of function fields. Throughout, let $F$ be a function with constants $\mathbb{F}_q$ of characteristic not 2, and let $N_d^F(G; q^{2n})$ be the number of degree $d$ extensions of $F$ with Galois closure $G$ over $F$ and discriminant equal to $q^{2n}$.\footnote{If $F$ is a function field of genus $g$, its absolute discriminant is $D_F=q^{2g-2}$.} 

\begin{theorem}\label{D4counts}
Let $F$ be a function field with constants $\F_q$ of characteristic not 2 and $q \geq 5$. Then, 

\begin{equation}\label{D4error12}
N_4^F(D_4; q^{2n}) =  q^{2n}\log q  \sum_{\substack{K \\ [K:F]=2}} \frac{\res \zeta_K(s)}{D_K^2 \zeta_F(2)} 
 + O(n 4^n q^{n+1})
\end{equation}
where $D_K$ is the absolute discriminant of $K$, and $\zeta_K(s)$ is the Dedekind zeta function of $K$.
\end{theorem}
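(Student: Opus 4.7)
The plan is to sort $D_4$ quartics $L/F$ by their unique Galois quadratic subfield $K/F$ (the field fixed by the normal $V_4\subset D_4$), reducing the enumeration to
\begin{equation*}
N_4^F(D_4; q^{2n}) \;=\; \sum_{[K:F]=2}\#\bigl\{L/K\text{ quadratic}\,:\,L/F\text{ has Galois closure }D_4,\ \operatorname{disc}(L/F)=q^{2n}\bigr\}.
\end{equation*}
Since $\operatorname{char}(F)\neq 2$, the conductor-discriminant formula gives $\operatorname{disc}(L/F) = D_K^2\cdot N_{K/F}(\mathfrak{f}_{L/K})$, where $\mathfrak{f}_{L/K}$ is the relative conductor (equal to the relative discriminant). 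This converts the constraint on $\operatorname{disc}(L/F)$ into a condition on $N_{K/F}(\mathfrak{f}_{L/K})$ for each fixed $K$.

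For each such $K$, I would encode the count of quadratic extensions of $K$ by relative conductor in a Dirichlet series
\begin{equation*}
\Phi_K(s) \;=\; \sum_{L/K\text{ quadratic}} N_{K/F}(\mathfrak{f}_{L/K})^{-s},
\end{equation*}
which by Kummer theory (or equivalently class field theory) admits a closed form in terms of $\zeta_K(s)$ and $\zeta_K(2s)$; factoring $\zeta_K(s) = \zeta_F(s)L(s,\chi_K)$ is what brings the $\zeta_F(2)$ into the main term. Because $F$ is a function field, $\zeta_K(s)$ is a rational function of $q^{-s}$, so the coefficient of $q^{-2ns}$ in $\Phi_K$ can be extracted exactly via Perron's formula on a narrow contour; pushing the contour past the simple pole at $s=1$ yields the per-$K$ main term $q^{2n}\log q\cdot\res\zeta_K(s)/(D_K^2\,\zeta_F(2))$, with an error controlled by the Riemann hypothesis for curves (i.e.\ the location of the zeros of the numerator polynomial of $\zeta_K$).

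Summing over quadratic $K/F$, the $D_K > q^n$ terms are vacuous (no such $L$ can have $\operatorname{disc}(L/F) = q^{2n}$), and extending the main-term sum to all $K$ costs only a tail absorbed by absolute convergence of $\sum_K \res\zeta_K(s)/D_K^2$. Finally, one subtracts the thin loci where $L/F$ has Galois closure $V_4$ (requiring $L$ to contain a second quadratic subfield of $F$) or $C_4$ (requiring $\mathfrak{f}_{L/K}$ to be $\operatorname{Gal}(K/F)$-invariant); both sit comfortably inside the claimed error.

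The main obstacle is securing uniformity in $K$. The per-$K$ Perron error, summed over the $O(q^n)$ quadratic $K$ with $D_K\leq q^n$ and combined with a $2^{\omega}$-type divisor factor coming from the expansion of $\zeta_K/\zeta_K(2\cdot)$, must combine with a log factor from the Perron truncation to yield the $O(n\,4^n\,q^{n+1})$ bound. The hypothesis $q\geq 5$ is precisely what guarantees $4q < q^2$, so the error is strictly smaller than the $q^{2n}$ main term.
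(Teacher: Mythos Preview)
Your proposal is essentially the paper's approach: the paper sums all quadratic-on-quadratic towers via the CDO identity $\sum_{K}\sum_{L/K}=2N_4^F(D_4)+N_4^F(C_4)+3N_4^F(V_4)$, applies its standalone effective quadratic count (Theorem~\ref{QuadCount}) to each intermediate $K$, extends the truncated main-term sum to all $K$, and disposes of $C_4$ and $V_4$ via Wright's theorem---exactly the structure you describe. One correction on the endgame: the $4^n$ in the paper does not arise from a $2^{\omega}$ divisor factor but from the per-$K$ contour error, which is of size $c_q^{\,2g_K}q^{\,n-2j}$ with $c_q=2/(1-q^{-1/2})$; summing over $j\le n/2$ (where $g_K=j+1$) against $N_2^F(S_2;q^{2j})\ll q^{2j+1}$ gives $O\!\bigl(n\,c_q^{\,n}\,q^{\,n+1}\bigr)$, and the hypothesis $q\ge 5$ is what forces $c_q<4$, rather than (only) what makes $4q<q^2$.
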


The main tool in proving Theorem \ref{D4counts} is an effective count of quadratic extensions of function fields. 

\begin{theorem}\label{QuadCount}
Let $F$ be a function field of genus $g_F$ with constants $\F_q$. Then, 
\begin{equation}\label{error14}
N_2^F(S_2; q^{2n}) = \frac{q^{2n} \log q}{\zeta_F(2)} ~ \res \zeta_F(s) + O\left(A^{g_F}q^{\frac n 2 + \frac{2g_F+1}{4}}\right)
\end{equation}
where $A$ and the implied constants are absolute.
\end{theorem}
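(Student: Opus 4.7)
My plan is to rewrite $N_2^F(S_2; q^{2n})$ as the $u^{2n}$-coefficient of an explicit rational function of $u = q^{-s}$, from which the asymptotics follow by reading off poles. Since $\mathrm{char}(F) \ne 2$, Kummer theory identifies quadratic extensions of $F$ with nonidentity classes in $F^*/(F^*)^2$, and the relative discriminant divisor $\mathfrak{d}_{K/F}$ of $F(\sqrt{\alpha})$ is the squarefree divisor collecting the places where $v(\alpha)$ is odd. Via the exact sequence $1 \to \F_q^* \to F^* \to \mathrm{Div}(X) \to \mathrm{Pic}(X) \to 0$ the divisor map $F^*/(F^*)^2 \to \mathrm{Div}(X)/2\mathrm{Div}(X)$ has image the squarefree classes lying in $2\mathrm{Pic}(X)$, with uniformly sized fibers. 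Orthogonality of characters of $\mathrm{Pic}(X)/2\mathrm{Pic}(X)$ combined with the Euler-product identity $\sum_{\mathfrak{d}\text{ squarefree}} \chi(\mathfrak{d}) u^{\deg \mathfrak{d}} = L_F(u, \chi)/\zeta_F(u^2)$ (valid when $\chi^2 = 1$) then produces, for $n \geq 1$,
\[
N_2^F(S_2; q^{2n}) \;=\; [u^{2n}] \sum_{\chi^2 = 1} \frac{L_F(u, \chi)}{\zeta_F(u^2)}.
\]

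Next I would extract the $u^{2n}$-coefficient. The inner poles on the circle $|u| = q^{-1}$ come only from the trivial character (pole of $\zeta_F(u)$ at $u = q^{-1}$) and, since $2n$ is even, from the degree-twisted character $\chi_{\deg}(\mathfrak{a}) = (-1)^{\deg \mathfrak{a}}$ (pole of $\zeta_F(-u)$ at $u = -q^{-1}$). Computing the residues at $u = \pm q^{-1}$ and converting to the $s$-coordinate via the expansion $1 - q^{1-s} = (\log q)(s-1) + O((s-1)^2)$ recovers the claimed main term $\frac{q^{2n}\log q}{\zeta_F(2)} \res \zeta_F(s)$. All remaining poles sit at zeros of $L_F(u^2)$ and therefore lie on the critical circle $|u| = q^{-1/4}$ by the Riemann hypothesis for curves.

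For the error I would estimate the contribution of each character uniformly. On the circle $|u| = q^{-1/4}$ the RH bound gives $|L_F(u, \chi)| \le (1 + q^{1/4})^{2g_F} \ll A^{g_F} q^{g_F/2}$, and extracting the coefficient by a contour integral of radius just inside $q^{-1/4}$ yields a contribution of order $A^{g_F} q^{n/2 + g_F/2}$ per character. Summing over the at most $2 \cdot 4^{g_F}$ quadratic characters of $\mathrm{Pic}(X)/2\mathrm{Pic}(X)$, and absorbing the factor $(1 - qu^2)$ in the denominator evaluated near the critical radius, assembles everything into the stated error $O(A^{g_F} q^{n/2 + (2g_F+1)/4})$.

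The main technical obstacle is keeping this error bound uniform in $\chi$ and $g_F$. A direct partial-fraction calculation at the $4 g_F$ zeros of $L_F(u^2)$ would require lower bounds on $|L_F'|$ that can degenerate when $L_F$ has near-coincident roots, and the contour-integral route circumvents this entirely by trading individual residues for the supremum norm of $L_F(u,\chi)/\zeta_F(u^2)$ along a single circle.
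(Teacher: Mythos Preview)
Your proposal follows essentially the same route as the paper: express the generating series as a sum of $L$-functions over order-two characters divided by $\zeta_F(2s)$ (the paper derives this as Lemma~\ref{QuadSeries} via $\mathrm{Cl}(F)$ after fixing a place at infinity rather than via $\mathrm{Pic}(X)$, but the content is the same), extract the $u^{2n}$-coefficient by a contour integral, pick up the residue at $u=q^{-1}$ for the main term, and bound the remaining integral on a circle near $|u|=q^{-1/4}$ using the Riemann hypothesis for curves.

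The one point you have glossed over is the precise choice of contour radius. Saying ``just inside $q^{-1/4}$'' is not enough: on $|u|=q^{-1/4-\varepsilon}$ the numerator $L_F(u,\chi)$ is indeed bounded by $(1+q^{1/4})^{2g_F}$, but the denominator $L_F(u^2)$ can be as small as $(1-q^{-2\varepsilon})^{2g_F}$, so the supremum of $|L_F(u,\chi)/\zeta_F(u^2)|$ picks up a factor $(1-q^{-2\varepsilon})^{-2g_F}$ that blows up as $\varepsilon\to 0$. Your claim that the contour route ``circumvents this entirely'' is therefore not quite right: you still need a quantitative choice of $\varepsilon$ to make the bound uniform in $g_F$. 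The paper takes $\varepsilon=1/\log q$, which makes $(1-q^{-2\varepsilon})^{-2g_F}$ an absolute constant to the power $g_F$ while costing only a bounded factor in $R^{-2n}$; this specific balance is exactly what produces the absolute constant $A$ and the exponent $\tfrac{n}{2}+\tfrac{2g_F+1}{4}$ in the stated error term.
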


For number fields, the expected error term for $N_2^F(S_2, X)$ is $O(X^{1/4 + \varepsilon})$, essentially mirroring the conjectured error term to obtain when enumerating the number of square-free integers up to some bound \cite{Pappalardi}, though this is not known even assuming the Riemann Hypothesis. As $q \to \infty$, we are seeing this error term reflected on the funcion field side in Theorem \ref{QuadCount}. Likewise, Theorem \ref{D4counts} has an error term analagous to $O(X^{1/2 + \epsilon})$, which one might  expect to be the best possible for enumerating $D_4$ quartic extensions over a number field, though again this is not known.

Theorem \ref{D4counts} together with the $n=4$ case of \cite[Theorem 1.b]{BSW} together imply that $N_4^F(D_4; q^{2n})$ and $N_4^F(S_4; q^{2n})$ have the same order of magnitude. It's natural then to compare their relative sizes. 

\begin{theorem}\label{MKff}
For any genus $g$ function field $F$ with constants $\mathbb{F}_q$,
\begin{equation}\label{ratio}
    \lim_{n \to \infty}\frac{N_4^F(D_4;q^{2n})}{N_4^F(S_4;q^{2n})} \gg \#Cl_F[2] \left(1-\frac{1}{\sqrt{q}}\right)^{4g-2}
\end{equation}
\end{theorem}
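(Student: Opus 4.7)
The plan is to compare the main terms of Theorem \ref{D4counts} and the function field case of \cite[Theorem 1.b]{BSW}. Writing the latter as $N_4^F(S_4; q^{2n}) \sim C_{S_4}(F)\, q^{2n}$ for an explicit constant $C_{S_4}(F)$ expressible as a product of local densities, and using that the error in Theorem \ref{D4counts} is $o(q^{2n})$ precisely when $q \geq 5$, the limit in \eqref{ratio} exists and equals
\[
\frac{\log q}{\zeta_F(2)\, C_{S_4}(F)} \sum_{\substack{K \\ [K:F]=2}} \frac{\res \zeta_K(s)}{D_K^2}.
\]
The problem thereby reduces to lower bounding the sum over quadratic extensions $K/F$.

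The natural route to the $\#Cl_F[2]$ factor is to restrict this sum to unramified quadratic extensions $K/F$. By class field theory there are $\#Cl_F[2]-1$ such geometrically connected extensions (hyperplanes in the $2$-torsion of the divisor class group); together with the constant field extension $F \cdot \mathbb{F}_{q^2}$, we retain $\gg \#Cl_F[2]$ terms. For any unramified $K/F$, Riemann--Hurwitz yields $g_K = 2g-1$ and hence $D_K = D_F^2 = q^{4g-4}$. Combining the function field residue formula
\[
\res \zeta_K(s) = \frac{h_K}{(q-1)\, q^{g_K-1}\, \log q}
\]
with Weil's bound $h_K = L_K(1) \geq (\sqrt{q}-1)^{2g_K} = q^{2g-1}(1 - 1/\sqrt{q})^{4g-2}$, each unramified $K$ supplies a factor of $(1 - 1/\sqrt{q})^{4g-2}$ to the sum, so that the $\#Cl_F[2]$ unramified extensions jointly contribute the claimed $\#Cl_F[2]\,(1 - 1/\sqrt{q})^{4g-2}$.

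The main obstacle is tracking the residual $q$- and $g$-dependence: the prefactor $\log q/(\zeta_F(2)\, C_{S_4}(F))$ combined with the leftover powers of $q$ from the residue formula and from $D_K^2 = q^{8g-8}$ must combine into a constant that is absolute, so that the implicit constant in \eqref{ratio} truly absorbs no hidden $F$-dependence. Since $C_{S_4}(F)$ is a product of local densities and $\zeta_F(2)$ is an Euler product, this reduces to a place-by-place comparison between BSW local densities and local Euler factors of $\zeta_F(2)$, which is the technical heart of the argument. The strategy and the required local computations mirror the treatment of the number field case in \cite{FK}, which one would adapt to the function field setting; the adaptation is routine at unramified places and requires some care at the places of wild ramification, though since $\operatorname{char}(F) \neq 2$ such issues do not arise for the quadratic resolvents driving the $D_4$ count.
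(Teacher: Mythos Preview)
Your approach is essentially the paper's: restrict the sum in Theorem~\ref{D4counts} to unramified quadratic $K/F$, count these via class field theory, and invoke Riemann--Hurwitz together with the Weil bounds for the per-term contribution. The one substantive difference is in how you match the $D_4$ and $S_4$ main terms. The paper factors $\res\zeta_K(s) = L(1,\chi_{K/F})\,\res\zeta_F(s)$ and bounds $L(1,\chi_{K/F}) \geq (1-1/\sqrt{q})^{2(2g-1)}$ directly via Weil; since the $S_4$ main term from \cite{BSW}, after normalizing discriminants via $D_{L/F}D_F^4 = D_L$, also carries $\res\zeta_F(s)$ and the same power $q^{-4(2g-2)}$, everything cancels and no place-by-place analysis of local densities is needed. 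Your route through the class number formula for $h_K$ and a local comparison with $C_{S_4}(F)$ would also arrive at the bound, but is less direct than the $L(1,\chi_{K/F})$ factorization. One small caveat: the paper's standing conventions take function fields to be geometrically connected over $\mathbb{F}_q$, so the constant extension $F\cdot\mathbb{F}_{q^2}$ should not be included among the unramified $K$; this is harmless since $\#\mathrm{Cl}_F[2]-1 \gg \#\mathrm{Cl}_F[2]$.
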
 

By specializing to hyperelliptic extensions $F$ of $\mathbb{F}_q(t)$, we can make an analagous statement for a positive proportion of all such $F$.

\begin{theorem} \label{100per}
For any $g \geq 2$ and $q$ a power of an odd prime, the proportion of  genus $g$ hyperelliptic extensions $F$ of the rational function field $\mathbb{F}_q(t)$ such that
\[
\lim_{n \to \infty}\frac{N_4^F(D_4;q^{2n})}{N_4^F(S_4;q^{2n})} \gg g^{\frac{1}{2}\log 2}\left(1-\frac{1}{\sqrt{q}}\right)^{4g-2}
\]
is at least $1 - O\left(\frac{1}{\log g}\right)$.
\end{theorem}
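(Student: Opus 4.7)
The plan is to deduce Theorem \ref{100per} from Theorem \ref{MKff} by proving a concentration estimate for $\#Cl_F[2]$ as $F$ ranges over genus $g$ hyperelliptic extensions of $\mathbb{F}_q(t)$.

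First, I would parametrize the family. Each such $F$ can be written as $\mathbb{F}_q(t)(\sqrt{D})$ for a squarefree monic $D \in \mathbb{F}_q[t]$ of degree $2g+1$ or $2g+2$, modulo the action of $\mathbb{F}_q^\times/(\mathbb{F}_q^\times)^2$. By genus theory for hyperelliptic function fields (the direct analog of Gauss' genus theory over $\Q$), we have
\[
\#Cl_F[2] \geq 2^{\omega(D) - c}
\]
for an absolute constant $c$, where $\omega(D)$ counts the distinct monic irreducible factors of $D$ (the shift depends only on the splitting behavior of the infinite place). Since $g^{\frac{1}{2}\log 2} = 2^{\frac{1}{2}\log g}$, Theorem \ref{MKff} reduces Theorem \ref{100per} to showing that, as $D$ ranges over squarefree monics of degree $N \in \{2g+1,\, 2g+2\}$, the proportion satisfying $\omega(D) \geq \tfrac{1}{2}\log g + O(1)$ is at least $1 - O(1/\log g)$.

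Second, I would carry out first and second moment estimates for $\omega$ via a function field Erd\H{o}s--Kac argument. The probability that a uniformly random monic polynomial of degree $N$ is divisible by a fixed monic irreducible $P$ of degree $d \leq N$ is exactly $q^{-d}$, and these events are almost independent across distinct $P$. Combined with the function field Prime Number Theorem $\pi_q(d) = q^d/d + O(q^{d/2}/d)$ and the fact that squarefree monics have density $1 - 1/q$, a standard computation yields
\[
\mathbb{E}[\omega(D)] = \log N + O(1), \qquad \mathrm{Var}[\omega(D)] = O(\log N).
\]
Chebyshev's inequality then gives
\[
\mathrm{Prob}\!\left[\omega(D) < \tfrac{1}{2}\log g \right] \ \leq \ \frac{O(\log N)}{\bigl(\tfrac{1}{2}\log N\bigr)^2} \ = \ O\!\left(\frac{1}{\log g}\right),
\]
which combined with Theorem \ref{MKff} finishes the proof.

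The main technical obstacle is the variance calculation: divisibility events for pairs of irreducibles whose degrees sum to more than $N$ are not exactly independent, so an honest sieving / inclusion--exclusion argument (in the style of Car or Bender--Richmond) is needed. Fortunately only a crude $O(\log N)$ bound is required, and the absolute constant $c$ coming from genus theory is absorbed into the $O(1)$ inside Chebyshev, so no delicate bookkeeping of constants is needed to land on the exponent $g^{\frac{1}{2}\log 2}$.
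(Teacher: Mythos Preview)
Your proposal is correct and follows essentially the same route as the paper: reduce to Theorem \ref{MKff} via genus theory $\#\mathrm{Cl}_F[2] \asymp 2^{\omega(D)}$, establish mean and variance $\asymp \log N$ for $\omega$ over squarefree monics of degree $N$, and apply Chebyshev with threshold $\tfrac12\log N$ to get the $1-O(1/\log g)$ proportion. The only technical difference is that the paper handles your ``main technical obstacle'' (pairs of irreducibles with $d+d'>N$) by working throughout with the truncated count $\omega_T(f)$, $T=N/2$, rather than the full $\omega$; this makes the moment computations in Propositions \ref{mean} and \ref{variance} exact inclusion--exclusion identities, and since $\omega_T \le \omega$ the lower bound on $\omega$ follows for free.
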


Observe that as $q \to \infty$ the lower bound from Theorem \ref{100per} aproaches $g^{\frac{1}{2}\log 2}$. Thus, Theorem \ref{100per} implies, as $q \to \infty$ and for $g$ sufficiently large, an arbitrarily large proportion of genus $g$ hyperelliptic extensions of $\mathbb{F}_q(t)$ have arbitrarily many more $D_4$ quartic than $S_4$ quartic extensions. 

Later, in Theorem \ref{basechange}, we extend Theorem \ref{MKff} in a different direction than Theorem \ref{100per} by giving conditions under which one can artificially inflate the number $D_4$ relative to the number of $S_4$ by base changing the function field under consideration to one with a larger field of constants.  

Throughout, function fields will be taken to have constants $\mathbb{F}_q$ where 2 does not divide $q$. In our setting, these function fields correspond to smooth projective and geometrically connected curves over $\mathbb{F}_q$.
 
 In Section 2 we prove prove Theorem \ref{QuadCount}.  In Section 3 we collect more results from field counting and prove Theorem \ref{D4counts}. The main idea is to count quadratic-on-quadratic extensions of a ground field using the estimates obtained in Section 2. In Section 4 we turn to the study of the ratio $N_4^F(D_4; X)/N_4^F(S_4; X)$ and prove Theorem \ref{MKff}. Finally, in Section 5, we study the statistics of the number of irreducible factors of polynomials over $\F_q$ in order to prove Theorem \ref{100per}.
 
\section{Enumerating Quadratic Extensions}
Throughout, let $F$ be the function field of a smooth projective and geometrically connected curve of genus $g_F$ over $\mathbb{F}_q$. We denote by $\text{Cl}(F)$ the class group of $F$, and by $\text{Cl}_F[2]$ those elements of the class group with order dividing 2. The goal of this section is to prove an estimate for the number of quadratic extensions of $F$ with discriminant equal to $q^{2n}$. 

Our first goal is to prove Theorem \ref{QuadCount}. To begin, we'll focus on the Dirichlet series

\begin{equation}
    \phi_{F,2}(s) \coloneqq \sum_{\substack{K \\ [K:F]=2}} \frac{1}{D_{K/F}^s}
\end{equation}
where the sum ranges over quadratic extensions $K$ over $F$; $D_{K/F}$ is the norm of the relative discriminant ideal. Its coefficients will determine the quantity $N_2^F(S_2, q^{2n})$ of interest. We first obtain the following characterization of $\phi_{F,2}(s)$:

\begin{lemma}\label{QuadSeries}
If $\phi_{F,2}(s)$ is as above, then, 
$$\phi_{F,2}(s)=\frac{2}{\zeta_F(2s)} \sum_{\chi \in  \widehat{\mathrm{Cl(F)}[2]}} L(s,\chi)$$
where the sum ranges over the group characters of $\mathrm{Cl}(F)[2]$.
\end{lemma}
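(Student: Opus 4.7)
The plan is to prove this identity by parametrizing quadratic extensions via Kummer theory on the left, expanding the right by character orthogonality, and matching the two sides through the unique factorization of ideals into a square part times a squarefree part.

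Since $\mathrm{char}(F) \neq 2$, Kummer theory gives a bijection between nontrivial classes in $F^\times/(F^\times)^2$ and quadratic extensions of $F$, sending $\alpha$ to $F(\sqrt\alpha)$. For any $\alpha \in F^\times$ the principal divisor decomposes uniquely as $(\alpha) = \mathfrak{c}^2 \mathfrak{d}(\alpha)$ with $\mathfrak{d}(\alpha)$ squarefree, and in odd residue characteristic one has $D_{F(\sqrt\alpha)/F} = \mathfrak{d}(\alpha)$. Thus
\[
\phi_{F,2}(s) \;=\; \sum_{\substack{[\alpha] \in F^\times/(F^\times)^2 \\ \alpha \neq 1}} N\mathfrak{d}(\alpha)^{-s}.
\]
On the other side, each $\chi \in \widehat{\mathrm{Cl}(F)[2]}$ is an unramified quadratic Hecke character of $F$, and orthogonality of characters on $\mathrm{Cl}(F)/2\,\mathrm{Cl}(F)$ yields
\[
\sum_{\chi} L(s, \chi) \;=\; \sum_{\mathfrak{a}} N\mathfrak{a}^{-s} \sum_{\chi} \chi(\mathfrak{a}) \;=\; |\mathrm{Cl}(F)[2]| \sum_{[\mathfrak{a}] \in 2\,\mathrm{Cl}(F)} N\mathfrak{a}^{-s}.
\]

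To bridge the two expressions, I would study the map $(\alpha, \mathfrak{c}) \mapsto \mathfrak{c}^2 \mathfrak{d}(\alpha)$ from $(F^\times/(F^\times)^2) \times \{\text{integral ideals}\}$ to integral ideals of $F$. Its image is exactly the set of $\mathfrak{a}$ with $[\mathfrak{a}] \in 2\,\mathrm{Cl}(F)$, and for each such $\mathfrak{a}$ the $\mathfrak{c}$-coordinate is determined, so the fiber size equals the number of square classes with prescribed squarefree part $\mathfrak{d}$. These fibers are all cosets of the fiber over the trivial divisor, namely
\[
G \;:=\; \bigl\{\alpha \in F^\times : (\alpha) \in 2\,\mathrm{Div}(F)\bigr\} \big/ (F^\times)^2.
\]
Sending $[\alpha]$ with $(\alpha) = \mathfrak{e}^2$ to $[\mathfrak{e}] \in \mathrm{Cl}(F)[2]$ gives a short exact sequence
\[
1 \to \mathbb{F}_q^\times/(\mathbb{F}_q^\times)^2 \longrightarrow G \longrightarrow \mathrm{Cl}(F)[2] \to 1,
\]
exploiting that the global units of $F$ equal $\mathbb{F}_q^\times$ (since $F$ corresponds to a smooth projective curve). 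For odd $q$ the leftmost group has order $2$, hence $|G| = 2|\mathrm{Cl}(F)[2]|$.

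Putting the pieces together, multiplying $\phi_{F,2}(s)$ by $\zeta_F(2s) = \sum_\mathfrak{c} N\mathfrak{c}^{-2s}$ repackages the $\alpha$-sum as $\sum_{(\alpha,\mathfrak{c})} N(\mathfrak{c}^2 \mathfrak{d}(\alpha))^{-s}$, which by the fiber count equals $2|\mathrm{Cl}(F)[2]| \sum_{[\mathfrak{a}] \in 2\,\mathrm{Cl}(F)} N\mathfrak{a}^{-s}$, and by orthogonality this is $2\sum_{\chi} L(s, \chi)$; dividing by $\zeta_F(2s)$ gives the claim. The main technical step is the fiber analysis, i.e., establishing the short exact sequence for $G$ and computing its order while correctly accounting for the trivial class $\alpha = 1$; the rest is a formal manipulation of Dirichlet series.
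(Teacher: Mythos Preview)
Your proposal is correct and follows essentially the same approach as the paper: Kummer theory to parametrize quadratic extensions, the squarefree/square decomposition of $(\alpha)$ to read off the discriminant, and character orthogonality on $\mathrm{Cl}(F)/2\mathrm{Cl}(F)$ to assemble the $L$-functions. The paper packages the fiber count as an explicit parametrization of quadratic extensions by triples $(\mathfrak{a},[\mathfrak{b}],u)$ (squarefree ideal, class in a $\mathrm{Cl}(F)[2]$-coset, unit modulo squares) and then sums over squarefree $\mathfrak{a}$ using $\sum_{\mathfrak{a}\ \text{sqfree}}\chi(\mathfrak{a})|\mathfrak{a}|^{-s}=L(s,\chi)/\zeta_F(2s)$, whereas you multiply by $\zeta_F(2s)$ and match globally via the short exact sequence for $G$; these are the same computation organized slightly differently, and both share the same harmless constant-term ambiguity from the trivial class that you flag explicitly.
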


\begin{proof}
Let $\mathcal{O}_F$ be the integral closure of $\mathbb{F}_q[x]$ in $F$. Then we can write any quadratic extension $K/F$ as $F(\sqrt{\alpha})$ for some non-square $\alpha \in \mathcal{O}_F$. Write $\alpha\mathcal{O}_F=\mathfrak a \frak{b}^2$ with $\frak a$ square free, this determines the discriminant, $D_{K/F}=|\frak a|$. Further, if for some $\alpha' \in F$ we had $F(\sqrt{\alpha}) \simeq F(\sqrt{\alpha'})$ and $\alpha'\mathcal{O}_F=\frak a \frak{b}'^2$ then $\frak b$ and $\frak{b}'$ belong to the same ideal class in $\text{Cl}(F)$.

Indeed, we may think of any quadratic extension $K/F$ is determined by a choice of $\frak a$ (giving the discriminant), a choice of class $[\frak b] \in \text{Cl}(F)$ and a unit $u$ (cf. \cite[Lemma 3.3]{CDO}). 

Note that if $u\neq u'$ are two non-square units, then $F(\sqrt{\alpha}) \ncong F(\sqrt{u\alpha})$, but $F(\sqrt{u\alpha}) \cong F(\sqrt{u'\alpha})$ since $u$ and $u'$ differ by a square as the unit group of a finite field is cyclic. This establishes a bijection between quadratic extensions $K$ over $F$ and triples of the form $(\frak a, [\frak b], u)$ where $\frak a$ is a square-free ideal of $\mathcal{O}_F$, $[\frak b]$ is a class of ideal in $\text{Cl}(F)$ such that $\frak a \frak b^2 = \alpha\mathcal{O}_F$ and $u \in \mathcal{O}_F^{\times}/\mathcal{O}_F^{\times 2}$.
Now, by orthogonality, the number of quadratic extensions $K$ over $F$
with discriminant $\mathfrak{a}$ is 
$$2\sum_{\chi \in \widehat{\mathrm{Cl}(F)[2]}} \chi(\mathfrak{a}).$$

Observing that
 $$N_2^F(S_2; q^n) = 2\sum_{\substack{\frak a \\ |\frak a|=q^n}} \sum_{\chi \in  \widehat{\text{Cl(F)}[2]}} \chi(\frak a),$$ 
 where the sum is over square-free ideals $\frak a$ of $\mathcal{O}_F$, we then have
\begin{align}
\phi_{F,2}(s) &= \sum_{n=1}^\infty   \sum_{\substack{K \\ [K:F]=2 \\ D_{K/F}=q^{n}}} q^{-ns} = \sum_{n=1}^\infty N_2^F(S_2, q^n)q^{-ns} \label{n2fseries} \\
\notag &= 2 \sum_{n=1}^\infty \sum_{\substack{\frak a \\ |\frak a|=q^n}} \sum_{\chi \in  \widehat{\text{Cl(F)}[2]}} \chi(\frak a) |\frak a|^{-s} 
=2  \sum_{\substack{\frak a}} \sum_{\chi \in  \widehat{\text{Cl(F)}[2]}} \chi(\frak a) |\frak a|^{-s} \\
\notag &= 2 \sum_{\chi \in  \widehat{\text{Cl(F)}[2]}} \frac{L(s,\chi)}{L(2s, \chi^2)} 
= \frac{2}{\zeta_F(2s)} \sum_{\chi \in  \widehat{\text{Cl(F)}[2]}} L(s,\chi)
\end{align}
as needed.
\end{proof}
This is reminiscent of the expression for the Dirichlet series obtained in \cite[Theorem 1.3]{CDO}, but the computations are significantly simpler in the function field setting. In particular, 2 does not divide the characteristic of $F$.

Before proceeding to the proof of Theorem \ref{QuadCount}, we state some additional supporting lemmas needed for the proof and in further sections.  A detailed discussion of the following two lemmas can be found, for example, in Rosen's book \cite{Rosen}.  

Making the change of variables $u=q^{-s}$, the zeta function of $F$ is given by
\begin{equation} \label{ffzeta}
\zeta_F(s) := Z_F(u) = \frac{L_F(u)}{(1-u)(1-qu)} \text{  with } L_F(u) \in \mathbb{Z}[u].
\end{equation}
Over $\mathbb{C}$, $L_F(u)$ factors as 
\begin{equation}\label{WC}
    L_F(u)=\prod_{i=1}^{2g_F} (1-\pi_iu)
\end{equation} where $g_F$ is the genus of the function field $F$.  We mainly will use the fact that the Riemann Hypothesis for Function Fields implies that the inverse roots of $L_F(u)$ have absolute value $\sqrt q$. We will make frequent use of the fact that $\zeta_F(s)$ and related $L$-functions may be written as rational functions. 

We also need an estimate for the 2-part of the class group of $K$, $\#\text{Cl}_F[2]$. See \cite[Prop. 5.11]{Rosen}.
\begin{lemma}\label{h2est}
With the notation all as before, $\#\mathrm{Cl}_F[2] \leq \#\mathrm{Cl}_F \leq (1+\sqrt q)^{2g_F}$.
\end{lemma}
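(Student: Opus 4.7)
The first inequality $\#\mathrm{Cl}_F[2] \leq \#\mathrm{Cl}_F$ will be immediate, since the 2-torsion is by definition a subgroup of the full class group. The substantive content is the upper bound on $\#\mathrm{Cl}_F$, which I would extract directly from the rational presentation of the zeta function in \eqref{ffzeta} and the Weil factorization in \eqref{WC}.

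The classical input I plan to invoke is the function-field analytic class number formula, $\#\mathrm{Cl}_F = L_F(1)$, where $L_F$ is the numerator polynomial of $Z_F(u)$. One justifies this by comparing the residue of $\zeta_F(s)$ at $s=1$ computed from the Euler product (which is proportional to $h_F$ via the analytic class number formula, with the regulator being trivially $\log q$ in this setting) against the residue read off from the rational form $(1-u)^{-1}(1-qu)^{-1}L_F(u)$. Granting this identity, I would then evaluate
$$\#\mathrm{Cl}_F = L_F(1) = \prod_{i=1}^{2g_F}(1 - \pi_i),$$
and, because $L_F(1)$ is a positive integer, apply the triangle inequality together with the Riemann Hypothesis for function fields $|\pi_i| = \sqrt{q}$ to obtain
$$\#\mathrm{Cl}_F \leq \prod_{i=1}^{2g_F}(1 + |\pi_i|) = (1 + \sqrt{q})^{2g_F}.$$

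Neither step should present a genuine obstacle: both the class number formula and the Riemann Hypothesis for function fields are standard and recorded in \cite{Rosen}, so the argument reduces to a one-line application of the triangle inequality to the Weil factorization. The only place one must be careful is ensuring that it is $\#\mathrm{Cl}_F$ (the degree-zero divisor class group, i.e.\ the $\mathbb{F}_q$-points of the Jacobian) that is being bounded, since this is what equals $L_F(1)$; larger variants of the class group would require an additional finite-index factor.
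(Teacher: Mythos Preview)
Your argument is correct and is exactly the standard proof: the class number formula $h_F = L_F(1)$ together with the Weil factorization \eqref{WC} and the Riemann Hypothesis $|\pi_i|=\sqrt{q}$ immediately give the bound via the triangle inequality. The paper itself does not supply a proof at all; it simply cites \cite[Prop.~5.11]{Rosen}, whose content is precisely the argument you have written out. One small quibble: your aside about the regulator being $\log q$ is not quite how the function-field class number formula is usually packaged (there is no archimedean place and hence no regulator in the number-field sense), but this does not affect the substance, since the identity $h_F = L_F(1)$ is what you actually use and it is correct as stated.
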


We are now ready to prove Theorem \ref{QuadCount}.
\begin{proof}[Proof of Theorem \ref{QuadCount}]
Beginning with (\ref{n2fseries}) and making the convinient change of variables $u \coloneqq q^{-s}$ we have 
$$\phi_{F,2}(s) = \sum_{n=1}^\infty N_2^F(S_2, q^n)u^{s}.$$
Write $\mathcal{L}_\chi(u)$ for the $L$-function $L(s, \chi)$ after the change of varibles to $u = q^{-s}$.  By applying Cauchy's integral formula to the expression for $\phi_{F,2}(s)$ of Lemma \ref{QuadSeries}, we have 

\begin{equation}\label{coefint}
N_2^F(S_2; q^{2n}) = \frac{1}{ \pi i} \sum_{\chi \in  \widehat{\text{Cl(F)}[2]}}  \oint_\gamma \frac{\mathcal{L}_\chi(u)}{Z_F(u^2)u^{2n+1}}du
\end{equation}
where $\gamma$ is a circle of sufficiently small radius $\varepsilon > 0$ centered at $u=0$. We compute $a_n$ by expanding the radius of $\gamma$ and computing residues of the integrand. The integrand has poles at the poles of $\mathcal{L}_\chi(u)$ and at the zeros of denominator. 

Observe that if $\chi$ in the sum of \eqref{coefint} is the trivial character, then $\mathcal{L}_\chi(u) = Z_F(u)$ and we get a pole at $u=\frac 1 q$. In other cases, $\mathcal{L}_\chi(u)$ doesn't contribute a pole, and so we will focus on the trivial $\chi$ case. The rest will follow in a similar fashion.  Note, the zeros of $Z_F(u^2)$ occur, by the Weil Conjectures,  only for values of $u$ where $|u|= q^{-\frac 1 4}$. 

Let $\gamma'$ be a circle centered at $u=0$ with radius $R$ satisfying $q^{-1} < R < q^{-\frac 1 4}$. 
In shifting the contour from $\gamma$ to $\gamma'$, this constraint forces us to pick up the residue of the integrand at $u= \frac 1 q$ but not any of the residues contributed from the zeros of $Z_F(u^2)$. For any such $R$ we have 
\begin{equation}\label{coefintres}
\frac{1}{ \pi i} \oint_{\gamma} \frac{Z_F(u)}{Z_F(u^2)u^{2n+1}}du = \frac{1}{ \pi i} \oint_{\gamma'} \frac{Z_F(u)}{Z_F(u^2)u^{2n+1}}du - \underset{u = \frac 1 q}{2\text{Res}} \frac{Z_F(u)}{Z_F(u^2)u^{2n+1}}.
\end{equation}
One verifies, using the change of variables $u=q^{-s}$, that
$$
\underset{u = \frac 1 q}{\text{Res}} \frac{Z_F(u)}{Z_F(u^2)u^{2n+1}} = -\frac{q^{2n} \log q}{\zeta_F(2)} \res \zeta_F(s).
$$
This shows the right side of \eqref{coefintres} yields 
\begin{equation}\label{Rcases}
N_2^F(S_2; q^{2n}) = \frac{2q^{2n} \log q}{\zeta_F(s)} \res \zeta_F(s) + \frac{1}{ \pi i}\oint_{\gamma'}\frac{Z_F(u)}{Z_F(u^2)u^{2n+1}} du.
\end{equation}
Using the factorization of $Z_F(u)$ as a rational function \cite[Theorem 5.6]{Rosen}, bound the integral above by bounding the integrand. Set 
\begin{equation}\label{errorR}
E := \left| \frac{Z_F(u)}{Z_F(u^2)u^{2n+1}} \right| = \left|\frac{(1-u^2)(1-qu^2)\prod_{i=1}^{2g_F}(1-\alpha_iu)}{(1-u)(1-qu)\prod_{i=1}^{2g_F}(1-\alpha_iu^2)u^{2n+1}}\right|
\end{equation}
where $|\alpha_i|= \sqrt{q}$. 

To complete our understanding of $\eqref{Rcases}$, we will bound $E$ in the cases $R=q^{-\frac 1 2}$ and $R=q^{-\frac 1 4 - \varepsilon}$ for some small $\varepsilon > 0$. In both cases  we bound the size of $Z_F(u)/Z_F(u^2)$ when $|u|=R$.  In bounding the numerator from above and denominator from below, we simply suppose each term is as large or small as possible, i.e. bounds following from taking $\alpha_i = \pm \sqrt{q}$ and $u= \pm R$.

First, setting $R = q^{-\frac 1 2}$, one finds 
\begin{equation}\label{Rq1/2}
E \leq 2 \left( \frac{2}{1-\frac{1}{\sqrt{q}}}\right)^{2g_F}\left( \frac{1+\frac{1}{\sqrt q}}{\sqrt q -1}\right)q^{n}.
\end{equation} 
Second, setting $R=q^{-\frac 1 4 - \varepsilon}$, one finds,  
\begin{equation}\label{Rq1/4eps}
E \leq \left( \frac{(1+q^{\frac 1 4 - \varepsilon})^{2g_F}(1+q^{- \frac 1 4 - \varepsilon})(1 + q ^{\frac 3 4 - \varepsilon})}{(1-q^{-\varepsilon})^{2g_F}(q^{\frac 3 4 - \varepsilon }-1)}\right)q^{\frac{2n - 1 + \varepsilon}{4}}.
\end{equation} 

Repeating these computations for nontrivial $\chi$ which contributed to the remaining terms in \eqref{coefint} suffices to prove the lemma. 
We then multiply each $E$ by $2\pi R$, the length of the circle of integration, to get an error term. After doing so and substituting $2n$ for $n$,  (\ref{Rq1/2}) and (\ref{Rq1/4eps}) yield, respectively, 
\begin{equation}\label{noeps}
N_2^F(S_2; q^{2n}) = \frac{2q^{2n} \log q}{\zeta_F(2)} ~ \res \zeta_F(s) + O\left( 2 \left( \frac{2}{1-\frac{1}{\sqrt{q}}}\right)^{2g_F}\left(1+\frac{1}{\sqrt q}\right)q^{n}\right)
\end{equation}
and, for any $\varepsilon > 0$,
\begin{equation}\label{witheps}
N_2^F(S_2; q^{2n}) = \frac{2q^{2n} \log q}{\zeta_F(2)} ~ \res \zeta_F(s) + O\left(\left( \frac{(1+q^{\frac 1 4 - \varepsilon})^{2g_F}(1+q^{- \frac 1 2 - \varepsilon})(1 + q ^{\frac 3 4 - \varepsilon})}{(1-q^{-\varepsilon})^{2g_F}}\right)q^{\frac{n +\varepsilon}{2}}\right).
\end{equation}
Note that each of the two formulas in \eqref{noeps} and \eqref{witheps} may have utility in their own right. We now make a choice of $\varepsilon > 0$ in (\ref{witheps}) that gives the statment of the theorem and a formula which is amenable to computation. 

Set $\varepsilon = 1/\log q$. Then the error term in (\ref{witheps}) becomes $$O\left(A^{g_F}q^{\frac n 2 + \frac{2g_F+1}{4}}\right)$$
where $A = (1-1/e)^{-2}$. The implied constant can be computed from (\ref{Rq1/4eps}) and is absolute.
\end{proof}
\begin{remark}
After fixing $q$ (and possibly some $\varepsilon > 0)$ in the proof above, the computations for $E$ for all $\chi$ in the proof of Theorem \ref{QuadCount} can be converted into explicit bounds on the error in the formulae of Theorem \ref{QuadCount}.
\end{remark}
We conclude the section with a uniform upper bound on the number of quadratic extensions of $F$. Though it isn't a strong as the preceding theorem, it will simplify some computations later.

\begin{lemma}\label{quadbound}
Fix a function field $F$ with constants $\mathbb{F}_q$. For any $n \geq 0$, 
$$N_2^F(S_2; q^{2n}) \ll \#\mathrm{Cl}_F[2] B^{2g_F}  q^{2n+1}$$
where $B= \left(\frac{1+e^{-1}q^{-1/2}}{1-e^{-2}q^{-3/2}}\right)^{2}$.
\end{lemma}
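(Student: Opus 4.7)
The plan is to specialize the contour-integration argument from the proof of Theorem \ref{QuadCount} to obtain a uniform upper bound in $n$. Beginning from
\[
N_2^F(S_2; q^{2n}) = \frac{1}{\pi i} \sum_{\chi \in \widehat{\mathrm{Cl}(F)[2]}} \oint_{|u|=R} \frac{\mathcal{L}_\chi(u)}{Z_F(u^2)\,u^{2n+1}}\,du,
\]
I take the radius $R = 1/(eq)$. This keeps the contour strictly inside the pole of $Z_F(u)$ at $u = 1/q$ (relevant only when $\chi$ is trivial), and this particular value is dictated by the form of $B$: it is precisely the radius that makes the Weil-bound estimates on the $L$-polynomials produce the expressions $1+e^{-1}q^{-1/2}$ and $1-e^{-2}q^{-3/2}$.

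On $|u| = 1/(eq)$, the Riemann hypothesis for function fields gives $|1-\alpha_i u| \leq 1+e^{-1}q^{-1/2}$ and $|1-\alpha_i u^2| \geq 1-e^{-2}q^{-3/2}$ for every inverse root $\alpha_i$ of $L_F$ (and likewise for each $\mathcal{L}_\chi$, whose inverse roots also have absolute value $\sqrt{q}$). The auxiliary rational factors $(1-u)^{\pm 1}$, $(1-qu)^{\pm 1}$, $(1-u^2)^{\pm 1}$, $(1-qu^2)^{\pm 1}$ arising in $Z_F(u)/Z_F(u^2)$ are bounded above and below by absolute constants, since $|qu| = 1/e < 1$. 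Multiplying out the bounds from all $2g_F$-root products in both numerator and denominator gives, uniformly in $\chi$,
\[
\left|\frac{\mathcal{L}_\chi(u)}{Z_F(u^2)}\right| \ll \left(\frac{1+e^{-1}q^{-1/2}}{1-e^{-2}q^{-3/2}}\right)^{2g_F}
\]
with an absolute implied constant.

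Finally, applying $\left|\oint_{|u|=R} f\,du\right| \leq 2\pi R \cdot \max|f|$ to each summand, absorbing the contour length $2\pi R$ against one factor of $R$ from the pole at $u = 0$, and summing over the $\#\mathrm{Cl}_F[2]$ characters yields an upper bound of the stated shape $\#\mathrm{Cl}_F[2] \cdot B^{2g_F} \cdot q^{2n+1}$. The main technical obstacle is the careful bookkeeping of powers of $B_0 = (1+e^{-1}q^{-1/2})/(1-e^{-2}q^{-3/2})$ so that the final exponent matches the stated $B^{2g_F} = B_0^{4g_F}$ rather than the naive $B_0^{2g_F}$, together with reconciling the factor $R^{-2n} = (eq)^{2n}$ against the target $q^{2n+1}$, which requires absorbing an $e^{2n}$-type factor into absolute constants in the $\ll$ notation.
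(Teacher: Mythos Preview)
Your approach is exactly the paper's: choose the contour radius $R=1/(eq)$ so as to stay inside the pole at $u=1/q$, bound $\mathcal{L}_\chi(u)/Z_F(u^2)$ on that circle via the Weil bounds $|\alpha_i|=\sqrt{q}$, and sum over the $\#\mathrm{Cl}_F[2]$ characters. The paper's proof is a terse two-line version of what you wrote.

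The two concerns you raise in your final paragraph are not artifacts of your write-up; they are present in the paper's own proof as well. The paper records $E \ll B^{g_F}q^{2n+1}$ for the trivial-character term and then states the conclusion with exponent $B^{2g_F}$ without comment, and it likewise writes $q^{2n+1}$ where the honest bound from $|u|^{-(2n+1)}=(eq)^{2n+1}$ carries an extra $e^{2n}$. You are right that $e^{2n}$ cannot be absorbed into an $n$-independent implied constant; the lemma as stated is loose in this respect. In the only place the lemma is used (the error analysis in the proof of Theorem~\ref{D4counts}, with $n$ replaced by $j\le n/2$), this discrepancy would only shift the base of the exponential $c_q^n$ in the final error term, so the structure of the main result is unaffected. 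You have not introduced any gap beyond what is already in the source; if anything, you have been more careful in flagging it.
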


\begin{proof}
We will mimic the method used in the proof of Theorem \ref{QuadCount} and use the same notation. Starting with \eqref{coefint}, bound the integrand but now with $R = 1/eq$. This has the effect of avoiding the pole at $u=1/q $ contributed by the integrand when $\chi$ is the trivial character. 

When $\chi$ is the trivial character and $R=1/eq$,  
$$E \ll B^{g_F}q^{2n+1}$$

where 
$$B= \left(\frac{1+e^{-1}q^{-1/2}}{1-e^{-2}q^{-3/2}}\right)^{2}.$$

 The same bound, albeit with different implied constants, is obtained for the remaining $\#\mathrm{Cl}_F[2]-1$ nontrival characters, $\chi$, appearing in \eqref{coefint}. Whence, 
$$N_2^F(S_2; q^{2n}) \ll \#\mathrm{Cl}_F[2] B^{2g_F} q^{2n+1}.$$

\end{proof}

\section{Enumerating $D_4$ Extensions}
We are now ready to enumerate $D_4$ quartic extensions of a function field $F$, i.e. to prove Theorem \ref{D4counts}. We'll first state some lemmas which, when taken with the results of Section 2, will suffice to prove the theorem.

\begin{lemma}[\cite{CDO}, Corollary 2.3]  \label{CDO}
Fix a global field $F$. We have the formal equality: 
\begin{equation} \label{CDOcount}
\sum_{\substack{K \\ [K:F]=2}}\sum_{\substack{L \\ [L:K]=2 \\ D_{L/F} \leq X}} 1 = 2N_4^F(D_4; X) + N_4^F(C_4; X) + 3N_4^F(V_4; X)\end{equation}
\end{lemma}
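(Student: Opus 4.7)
The plan is to prove this combinatorial identity by verifying it termwise on each quartic extension $L/F$ of bounded discriminant, grouping contributions according to the Galois group $G$ of the Galois closure $\tilde L/F$. For each $F$-isomorphism class of quartic $L$, one counts the number of pairs $(K, L')$ on the LHS with $K$ a quadratic $F$-extension and $L'/K$ a quadratic extension $F$-isomorphic to $L$, where $L'$ is taken up to $K$-isomorphism. The multiplicity with which a given $L$ appears on the LHS must then equal $2$, $1$, $3$ for $G = D_4, C_4, V_4$ respectively, and $0$ for $G = A_4, S_4$; the identity follows by summing over $L/F$ with $D_{L/F} \leq X$.

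The transitive subgroups of $S_4$ are exactly $C_4$, $V_4$, $D_4$, $A_4$, $S_4$. If $H = \mathrm{Gal}(\tilde L/L)$ denotes the index-$4$ subgroup inside $G = \mathrm{Gal}(\tilde L/F)$, then quadratic subfields of $L$ over $F$ correspond bijectively to subgroups $H'$ with $H \subseteq H' \subseteq G$ and $[G:H']=2$. A direct inspection shows that $A_4$ and $S_4$ admit no such $H'$: the point stabilizers $A_3 \subset A_4$ and $S_3 \subset S_4$ are maximal subgroups, so $A_4$- and $S_4$-quartics contribute nothing to the LHS, consistent with the RHS.

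For the three contributing Galois types, the bookkeeping is as follows. In the $V_4$ case, $L/F$ is Galois with three pairwise non-isomorphic quadratic subfields $K_1, K_2, K_3$, and for each $K_i$ the extension $L/K_i$ is unique up to $K_i$-isomorphism, yielding multiplicity $3$. In the $C_4$ case, $L/F$ is cyclic with a unique quadratic subfield $K$ and a unique quadratic extension $L/K$, yielding multiplicity $1$.

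The main obstacle is the $D_4$ case, where $L$ has, up to $F$-isomorphism, only one quadratic subfield $K$ (fixed by the unique Klein-four subgroup of $D_4$ containing $H$), yet the multiplicity is $2$. The point is that $L/F$ is not Galois: the conjugacy class of $H$ in $D_4$ has size two, giving two distinct subfields $L_1, L_2 \subset \tilde L$ which are $F$-isomorphic to $L$ and both quadratic over the same $K$. These $L_i$ are not $K$-isomorphic, because $\mathrm{Gal}(\tilde L/K) \simeq V_4$ is abelian and therefore acts trivially by conjugation on its order-$2$ subgroups, so the two subgroups determining $L_1$ and $L_2$ are not conjugate in $\mathrm{Gal}(\tilde L/K)$. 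Hence a single $F$-isomorphism class of $D_4$ quartic $L$ contributes two distinct $K$-isomorphism classes of pairs $(K, L_i/K)$, giving multiplicity $2$. Summing these contributions over all $L/F$ with $D_{L/F} \leq X$ yields the stated identity.
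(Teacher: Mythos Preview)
Your argument is correct and is exactly the approach the paper intends: it cites \cite{CDO} and remarks only that the proof ``relies only on the Galois correspondence and the structure of quartic extensions obtained as quadratic-on-quadratic extensions of $F$,'' which is precisely the case-by-case multiplicity count you carry out. Your handling of the $D_4$ case---identifying the two $F$-conjugate but $K$-inequivalent subfields inside the Galois closure---is the key point, and you have it right.
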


The proof of this fact is the same as in \cite{CDO} and relies only on the Galois correspondence and the structure of quartic extensions obtained as quadratic-on-quadratic extensions of $F$. Since we can only have discriminants which are even powers of $q$, we'll take $X=q^{2n}$. 

The idea is to use \eqref{CDOcount} to understand $N_4^F(D_4;X)$. Now we state some lemmas that will control the last two terms of \eqref{CDOcount}. Then the remainder of the section will be devoted to understanding the lefthand side of \eqref{CDOcount}. 

\begin{lemma} \label{wright}
If $F$ is a global field, then as $n \to \infty$, $$N_4^F(C_4; q^{2n}) = O\left(q^n\right)$$
and $$N_4^F(V_4; q^{2n}) = O\left(q^n\log(q^{2n})^2 \right).$$
\end{lemma}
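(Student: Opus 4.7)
The plan is to treat the two estimates separately, since they have rather different flavors; both are function-field analogues of classical number-field results.

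\textbf{The $V_4$ estimate.} A $V_4$ quartic extension $L/F$ contains exactly three distinct quadratic subfields $K_1, K_2, K_3$, and $L = K_i K_j$ for any $i \neq j$. Applying the conductor-discriminant formula to the four characters of $V_4$ (the trivial character together with the three quadratic characters cutting out the $K_i$) yields
\[
D_{L/F} = D_{K_1/F}\, D_{K_2/F}\, D_{K_3/F}.
\]
Writing $K_i = F(\sqrt{\alpha_i})$ with $\alpha_i \in \mathcal{O}_F$ squarefree and setting $\mathfrak{d} = \gcd(D_{K_1}, D_{K_2})$, one computes $D_{K_3} = D_{K_1} D_{K_2} / \mathfrak{d}^{2}$, so the constraint $|D_{L/F}| = q^{2n}$ is equivalent to parametrizing $L$ by a triple $(\mathfrak{a}, \mathfrak{b}, \mathfrak{d})$ of pairwise coprime squarefree ideals of $\mathcal{O}_F$ with $|\mathfrak{a}\mathfrak{b}\mathfrak{d}| = q^n$, together with a pair of quadratic extensions $K_1, K_2$ of discriminants $\mathfrak{a}\mathfrak{d}$, $\mathfrak{b}\mathfrak{d}$. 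Since each $V_4$ extension arises from three such unordered pairs,
\[
3\, N_4^F(V_4; q^{2n}) \;\le\; \sum_{|\mathfrak{a}\mathfrak{b}\mathfrak{d}| = q^n} h(\mathfrak{a}\mathfrak{d})\, h(\mathfrak{b}\mathfrak{d}),
\]
where $h(D)$ denotes the number of quadratic extensions of $F$ with exact discriminant $D$. By the orthogonality argument in the proof of Lemma \ref{QuadSeries}, $h(D) = O(\#\mathrm{Cl}_F[2])$ uniformly in $D$, while the number of pairwise coprime squarefree triples with $|\mathfrak{a}\mathfrak{b}\mathfrak{d}| = q^n$ is $O(q^n n^{2})$ — as the number of squarefree ideals of norm $q^k$ is $\Theta(q^k)$, and $\sum_{a+b+d=n} q^{a+b+d} = O(q^n n^{2})$. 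Combining yields $N_4^F(V_4; q^{2n}) = O(q^n n^{2}) = O(q^n \log(q^{2n})^{2})$, absorbing constants depending only on $q$, $g_F$, and $\#\mathrm{Cl}_F[2]$.

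\textbf{The $C_4$ estimate.} A $C_4$ extension $L/F$ has a unique quadratic subfield $K$, and the conductor-discriminant formula applied to the four characters of $C_4$ gives
\[
D_{L/F} = D_{K/F} \cdot \mathfrak{f}(\chi)^{2},
\]
where $\chi$ is either of the two order-$4$ characters of $\mathrm{Gal}(L/F)$. By class field theory, the $C_4$ extensions of $F$ with a fixed quadratic subfield $K$ are in bounded-to-one correspondence with order-$4$ characters of an appropriate ray class group of $F$ whose square cuts out $K$; equivalently, they come from Kummer generators $L = K(\sqrt{\beta})$ satisfying a specific rigidity condition on the action of $\mathrm{Gal}(K/F)$ that forces the Galois closure over $F$ to be cyclic rather than biquadratic. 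Summing the count of such characters — only $O(1)$ for each admissible conductor $\mathfrak{f}$ — over the relevant $\mathfrak{f}$ of norm $q^{(2n-a)/2}$ with $|D_{K/F}| = q^a$, and then against the Theorem \ref{QuadCount} count of quadratic $K$ with $|D_{K/F}| = q^{a}$ under the constraint $|D_{K/F}|\cdot|\mathfrak{f}|^{2} = q^{2n}$, telescopes to $N_4^F(C_4; q^{2n}) = O(q^n)$.

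The main obstacle is the $C_4$ part: a naive convolution of conductor and discriminant counts produces a spurious factor of $n$. Capturing the Kummer-theoretic rigidity that distinguishes $C_4$ from $V_4$ Galois closure — the function-field analogue of Wright's theorem counting cyclic quartic extensions — is what enables the clean $O(q^n)$ bound. The $V_4$ argument, by contrast, is a relatively direct double-counting once the conductor-discriminant formula has reduced the problem to squarefree factorizations.
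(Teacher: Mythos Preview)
The paper's own proof of this lemma is a bare citation: both estimates are read off from Wright's theorem on the distribution of discriminants of abelian extensions of global fields \cite{W}, together with a Tauberian argument to pass from the analytic behavior of the generating Dirichlet series to a coefficient bound. No direct counting is attempted.

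Your $V_4$ argument is a legitimate and self-contained alternative. The conductor--discriminant identity $D_{L/F}=D_{K_1/F}D_{K_2/F}D_{K_3/F}$ is correct in odd characteristic, the parametrization by coprime squarefree triples $(\mathfrak{a},\mathfrak{b},\mathfrak{d})$ with $|\mathfrak{a}\mathfrak{b}\mathfrak{d}|=q^{n}$ is standard, the multiplicity bound $h(D)=O_F(1)$ follows from Lemma~\ref{QuadSeries}, and the crude triple-convolution bound $\sum_{a+b+d=n}q^{a}q^{b}q^{d}=O(n^{2}q^{n})$ gives exactly the stated $O(q^{n}\log(q^{2n})^{2})$. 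This is more informative than a black-box citation, since it makes the dependence on $\#\mathrm{Cl}_F[2]$ and $g_F$ visible.

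Your $C_4$ argument, however, has a genuine gap. The assertion that for fixed $K$ and fixed conductor $\mathfrak{f}$ there are ``only $O(1)$'' order-$4$ characters $\chi$ with $\chi^{2}=\chi_{K}$ and $\mathfrak{f}(\chi)=\mathfrak{f}$ is not justified, and in fact is not true in any obvious uniform sense: once one such $\chi_{0}$ exists, the others are $\chi_{0}\psi$ with $\psi$ running over quadratic characters, and the number of those with conductor dividing $\mathfrak{f}$ grows with $\omega(\mathfrak{f})$. You yourself flag this in the final paragraph, conceding that the naive convolution produces a spurious factor of $n$ and that what saves the day is ``the function-field analogue of Wright's theorem.'' But that is precisely the statement to be proved; invoking it here is circular. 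Either you must carry out the Kummer-theoretic analysis explicitly --- showing, for instance, that the rigidity condition $\sigma(\beta)\equiv\beta^{-1}\pmod{K^{\times 2}}$ cuts the count down by the needed factor --- or you must cite Wright, which is exactly what the paper does. As written, the $C_4$ half of your proposal is a proof sketch that stops short at the one nontrivial step.
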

\begin{proof}
Both of these estimates follow from \cite[Theorem 1]{W} and applications of Tauberian theorems. 
\end{proof}

\begin{remark}\label{D4rawform}
An immediate consequence of Lemmas \ref{CDO} and \ref{wright} is that 

\begin{equation} \label{D4SumandError}
N_4^F(D_4; q^{2n}) = \frac 1 2 \sum_{\substack{K \\ [K:F]=2}} \sum_{\substack{L \\ [L:K]=2 \\ D_L = q^{2n}}} 1 + O\left((q^n \log(q^{2n})^2\right).
\end{equation}
Indeed, analyzing the sum above will constitute the main idea of the the proof. 
\end{remark}

We are now ready to prove Theorem \ref{D4counts}. 

\begin{proof}[Proof of Theorem \ref{D4counts}]
Let $D_F$ be the discriminant of $F$ and let $D_{L/F}$ be the norm of the relative discriminant ideal of $L/F$. We have the relation that $D_FD_{L/F}^2 = D_L$ \cite[Theorem A]{JP}. 

We are trying to compute the sum (\ref{D4SumandError}) in Remark \ref{D4rawform}.

\begin{equation} \label{mainsum}
N_4^F(D_4; q^{2n}) = \frac 1 2 \sum_{\substack{K \\ [K:F]=2}} \sum_{\substack{L \\ [L:K]=2 \\ D_L = q^{2n}}} 1 + O\left((q^n \log(q^{2n})^2\right)
\end{equation}
where the sum counts quadratic-on-quadratic extensions of $F$ of discrimiant $q^{2n}$ and the error term comes from those biquadratic extensions with galois group $C_4$ or $V_4$.

To control the sum above, we introduce an auxiliary parameter $j$, which will start at $-1$ and then run over integers up to $n/2$, and rewrite \eqref{mainsum} as:
\begin{equation}
\sum_{j \leq n/2} \sum_{\substack{K \\ [K:F]=2 \\ D_K=q^{2j}}}\sum_{\substack{L \\ [L:K]=2 \\ D_{L/K}=q^{2n-4j}}} 1 = \sum_{j \leq n/2}\sum_{\substack{K \\ [K:F]=2 \\ D_K=q^{2j}}} N^K_2(q^{2n-4j}).
\end{equation}
Note that the parameter $j$ controls the discriminant (and also the genus) of the intermediate field $K$. In particular, the genus of each intermediate field $K$ is $g_K=j+1$.

First apply the first estimate for $N^K_2(S_2; q^{2n-4j})$ from Theorem \ref{QuadCount}:

\begin{align}\label{sub12}
\sum_{j \leq n/2 }\sum_{\substack{K \\ [K:F]=2 \\ D_K=q^{2j}}} & N^K_2(S_2; q^{2n-4j}) \\
\notag &= \sum_{j \leq n/2 }\sum_{\substack{K \\ [K:F]=2 \\ D_K=q^{2j}}} \left[\frac{2q^{2n-4j} \log q}{\zeta_F(2)} ~ \res \zeta_K(s) + O\left( 2 \left( \frac{2}{1-\frac{1}{\sqrt{q}}}\right)^{2g_K}\left(1+\frac{1}{\sqrt q}\right)q^{n-2j}\right)\right]
\end{align}

The main term of the above is 
\begin{equation}
 2q^{2n}\log q \sum_{j \leq n/2 }\sum_{\substack{K \\ [K:F]=2 \\ D_K=q^{2j}}} \frac{\res \zeta_K(s)}{q^{4j} \zeta_F(2)}.
\end{equation}

We can compare the sum above to the same, but untruncated, series where $j$ runs over all the integers, to find:
\begin{equation}\label{infversion}
 2q^{2n}\log q \sum_{j \leq n/2 }\sum_{\substack{K \\ [K:F]=2 \\ D_K=q^{2j}}} \frac{\res \zeta_K(s)}{q^{4j} \zeta_K(2)} =
 2q^{2n}\log q \sum_{j=0}^\infty \sum_{\substack{K \\ [K:F]=2 \\ D_K=q^{2j}}} \frac{\res \zeta_K(s)}{q^{4j} \zeta_K(2)} + O(q^{-n}).
\end{equation}
The above can be seen by estimating the infinite series with a geometric series using upper and lower bounds on $\res \zeta_K(s)$ and $\zeta_K(2)$ that are independent of $K$. We'll see the error term of $\label{infversion}$ is subsumed by the error term from \eqref{sub12}.

Now we compute the error term in \eqref{sub12}. For ease of notation, set $c_q := \frac{2}{1-q^{-\frac 1 2}}$.

Using Lemma \ref{quadbound}, the error term can be seen to be of size 
\begin{align*}
q^n \sum_{j \leq n/2 }\sum_{\substack{K \\ [K:F]=2 \\ D_K=q^{2j}}} 2c_q^{2j+2}\left(1+\frac{1}{\sqrt q}\right) q^{-2j} &=  q^n \sum_{j \leq n/2 }N_2^F(S_2, q^{2j}) 2c_q^{2j+2}\left(1+\frac{1}{\sqrt q}\right)q^{-2j} \\
&\ll q^{n+1}c_q^2 h_2(F) B^{g_F} \left(1+\frac{1}{\sqrt q}\right)\sum_{j \leq n/2}  c_q^{2j} \\
&\ll q^{n+1}h_2(F)B^{g_F}nc_q^n.
\end{align*}
This gives an error term $O\left(nc_q^n q^{n+1} \right)$ which dominates the error term in \eqref{mainsum}. Finally, notice $c_q < 5$ for any choice of $q$, and $c_q < 4$ when $q\geq 5$. 
We have thus proved (\ref{D4error12}) of Theorem \ref{D4counts}.
\end{proof}
 
 \begin{remark}
 Using \eqref{error14} of Theorem \ref{QuadCount} in the proof above does not yield a better error term.
 \end{remark}
 This essentially recovers the asymtotic given by Cohen, Diaz y Diaz, and Olivier in \cite{CDO} in the function field setting. The easier analysis granted to us by the Weil Conjectures lets us improve on the number field version, yielding more than just an asymptotic.  If we take $n \to \infty$ in Theorem \ref{D4counts}, this connection is made visibly clear by the following formulae.
\begin{cor} With the same notation as above, 
$$\lim_{n \to \infty} \frac{N_4^F(D_4; q^{2n})}{q^{2n}} = \frac{\log q}{2} \sum_{j=-1}^
\infty \sum_{\substack{K \\ [K:F]=2 \\ D_K=q^{2j}}} \frac{\res \zeta_K(s)}{q^{4j} \zeta_F(2)}= \frac{\log q}{2} \sum_{\substack{K \\ [K:F]=2}} \frac{\res \zeta_K(s)}{D_K^2 \zeta_F(2)}  .$$
\end{cor}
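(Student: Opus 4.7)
The plan is to derive the corollary as an immediate consequence of Theorem \ref{D4counts} by dividing both sides by $q^{2n}$ and letting $n\to\infty$. The main-term coefficient carries over directly, while the error term becomes
\[
\frac{O(n\cdot 4^n\cdot q^{n+1})}{q^{2n}} \;=\; O\!\bigl(n\cdot q\cdot (4/q)^n\bigr).
\]
Since Theorem \ref{D4counts} assumes $q\geq 5$, we have $4/q \leq 4/5 < 1$, so this error decays geometrically to zero. Passing to the limit therefore produces the single-sum expression on the right-hand side of the corollary.

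To justify the equality between the double sum (indexed by $j$) and the single sum (indexed by $K$), reindex by the discriminant exponent. Any quadratic extension $K/F$ has absolute discriminant $D_K = q^{2g_K - 2}$, so setting $j = g_K - 1$ yields $D_K = q^{2j}$ and $D_K^2 = q^{4j}$, with $j \geq -1$ (the value $j=-1$ occurring only when $K$ is rational). Partitioning the sum over $K$ according to the value of $j$ rewrites $\sum_{[K:F]=2} \frac{\res\zeta_K(s)}{D_K^2 \zeta_F(2)}$ as $\sum_{j=-1}^{\infty} \sum_{D_K = q^{2j}} \frac{\res\zeta_K(s)}{q^{4j}\zeta_F(2)}$ term by term, which is the first equality in the corollary.

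There is no substantive analytic obstacle here. Convergence of the series over quadratic $K$ is already implicit in the statement of Theorem \ref{D4counts} (indeed, without it the main term would be meaningless), and the sole quantitative check is that the error term is $o(q^{2n})$, which is immediate from $q\geq 5$. The corollary is thus essentially a restatement of Theorem \ref{D4counts} after normalization, with the two forms of the limiting sum related by the elementary reparametrization above.
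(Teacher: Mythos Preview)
Your proposal is correct and matches the paper's approach: the paper gives no separate proof of this corollary, stating only that it follows by taking $n\to\infty$ in Theorem~\ref{D4counts}, and you have simply spelled out the two immediate checks (the error term is $o(q^{2n})$ once $q\geq 5$, and the two displayed sums agree via the reindexing $D_K=q^{2j}$).
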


\section{Enumerating the $D_4$-$S_4$ disparity.}
We'll turn now to considering the ratio  $N_4^F(D_4; q^{2n})/N_4^F(S_4; q^{2n})$. In \cite{FK}, this quantity is studied in the case that $F$ is a number field. Of interest is the case when $F$ has more $D_4$ than $S_4$ quartic extensions. One of the main results of \cite{FK} is that this ratio may be skewed arbitrarily much in favor of the $D_4$ quantity.  To obtain the analagous result for function fields, i.e. Theorem \ref{100per}, we'll first address ourselves to the proof of Theorem \ref{MKff} to give a lower bound for  $N_4^F(D_4; q^{2n})/N_4^F(S_4; q^{2n})$.

\begin{proof}[Proof of Theorem \ref{MKff}]
First, we underestimate $N_4^F(q^{2n};D_4)$ by restricting the sum in Theorem \ref{D4counts} to be over only those $K$ which are unramified over $F$. Given such an extension $K$, the different divisor $\text{Diff}(L/K) = \sum_{\frak P} c_{\frak P} \frak P$ is 0, i.e. $c_{\frak P}=0$ for all primes $\frak P$ of $K$. So Riemann-Hurwitz tells us $g_K=2g_F-1$. Note also that class field theory tells us that there are $\#\text{Cl}_F[2]-1$ such extensions $K$.

Further, in restricting ourselves to unramified extension, the $q^{4j}$ term appearing in Theorem \ref{D4counts} can be rewrriten as $q^{4(2g_F-2)}$ via $j=g_k-1 = 2g_F-2$. Our underestimate for $N_4^F(D_4; q^{2n})$ is 

\begin{align}\label{D4low}
   \lim_{n \to \infty}\frac{N_4^F(D_4;q^{2n})}{q^{2n}} &\gg \log q \sum_{\substack{K \\ [K:F]=2 \\ \text{unram.}}}  \frac{\res\zeta_K(s)}{q^{4(2g_F-2)}\zeta_F(2)} \\
    &\notag= \log q \sum_{\substack{K \\ [K:F]=2 \\ \text{unram.}}} \frac{L(1, \chi_{K/F})\res \zeta_F(s)}{q^{4(2g_F-2)}\zeta_F(2)} \\
    &\notag \geq \log q ~ \#\text{Cl}_F[2]\left(1-\frac{1}{\sqrt{q}}\right)^{2(2g_F-1)} \frac{\res \zeta_F(s)}{q^{4(2g_F-2)}\zeta_F(2)}.
\end{align}

Note that this is not such a terrible truncation of the sum in Theorem \ref{D4counts}: Estimate each residue of $\zeta_K(s)$ at $1$, then factoring these estimates outside the sum leaves a rapidly convergent series, to which the main contributions are made by small $j$ terms, including the unramified extensions. 

We'll now estimate $N_4^F(S_4;q^{2n})$. The degree 4 case of \cite[Theorem 1.b]{BSW} gives the number of $S_4$ quartic extensions of $F$ with relative descriminant equal to some power of $q$. Using the relation $D_{L/F} D_{F}^4 = D_L$ \cite[Theorem A]{JP}, we can find the number of $S_4$ quartic extensions $L$ over $F$ with absolute discriminant $q^{2n}$ by counting the number of $S_4$ quartic extensions $L$ over $F$ with relative discriminant of norm $D_{L/F}=q^{2n}/D_F^4 = q^{2n}/q^{4(g_F-2)}$. Whence, 
\begin{equation}\label{S4up}
\lim_{n \to \infty}\frac{N_4^F(S_4;q^{2n})}{q^{2n}}  \ll_F q^{-4(2g_F-2)} \log q ~\res \zeta_F(s). 
\end{equation}

Taking the ratio of \eqref{D4low} and \eqref{S4up} gives us the desired result. 
\end{proof}

The contribution from $\text{Cl}_F[2]$ in Theorem \ref{MKff} is not necessarily easily computed. We can base change our field $F$ over $\mathbb{F}_q$ to have some larger field of constants $\mathbb{F}_{q^m}$ to get a more explicit bound on the right of \eqref{ratio} of Theorem \ref{MKff}. In doing so, we can understand fully the contribution of $\text{Cl}_F[2]$.  

\begin{theorem}\label{basechange}
Let $F$ be the function field of a curve $C/\mathbb{F}_q$ of genus $g$. There exists a constant $m$ such that if we base change $C$ to be over $\mathbb{F}_{q^m}$ and let $F'$ be the corresponding function field, then
$$\lim_{n \to \infty}\frac{N_4^{F'}(q^{2n};D_4)}{N_4^{F'}(q^{2n};S_4)} \gg 2^{2g} \left(1-\frac{1}{\sqrt{q^m}}\right)^{4g-2}.$$
\end{theorem}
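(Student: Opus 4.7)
The plan is to apply Theorem \ref{MKff} to the base-changed function field $F'$ and reduce the whole claim to finding an $m$ that makes $\#\mathrm{Cl}_{F'}[2]$ as large as $2^{2g}$. Base change to a larger finite field does not change the genus of a smooth projective geometrically connected curve, so Theorem \ref{MKff} applied to $F'$ reads
\[
\lim_{n \to \infty}\frac{N_4^{F'}(D_4;q^{2n})}{N_4^{F'}(S_4;q^{2n})} \gg \#\mathrm{Cl}_{F'}[2]\left(1-\frac{1}{\sqrt{q^m}}\right)^{4g-2}.
\]
Thus it suffices to produce a positive integer $m$ for which $\#\mathrm{Cl}_{F'}[2] \geq 2^{2g}$.

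The key observation is that $\mathrm{Cl}_{F'}$ captures, up to a finite-index discrepancy coming from the infinite places of $F'$, the group $J_C(\mathbb{F}_{q^m})$ of $\mathbb{F}_{q^m}$-points of the Jacobian $J_C$ of $C$; this is the same identification underlying the Weil bound cited in Lemma \ref{h2est}. Over $\overline{\mathbb{F}_q}$, the geometric 2-torsion $J_C[2]$ is a finite group isomorphic to $(\mathbb{Z}/2\mathbb{Z})^{2g}$, so the Frobenius $\mathrm{Frob}_q$ acts on it through a finite cyclic quotient. Choosing $m$ to be any multiple of the order of that Frobenius action renders every geometric 2-torsion point $\mathbb{F}_{q^m}$-rational, whence $J_C(\mathbb{F}_{q^m})[2] = J_C[2]$ has order $2^{2g}$.

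The main technical hurdle will be promoting this equality for the Jacobian into the inequality $\#\mathrm{Cl}_{F'}[2] \geq 2^{2g}$. In the conventions of the paper, $\mathrm{Cl}(F)$ is the ideal class group of the integral closure of $\mathbb{F}_q[x]$ in $F$, which is a quotient of $\mathrm{Pic}(C)$ by the subgroup of classes supported on the infinite places; however, that subgroup is controlled by data (the number and degrees of infinite places) that is bounded independently of $m$, so any shrinkage in the 2-torsion that it causes is absorbed into the implicit constant of $\gg$. Combining this with the version of Theorem \ref{MKff} applied to $F'$ yields the desired lower bound.
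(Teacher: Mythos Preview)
Your proposal is correct and follows essentially the same strategy as the paper: apply Theorem~\ref{MKff} to the constant-field extension $F'$, use that the genus is unchanged under base change, and choose $m$ so that all $2^{2g}$ geometric $2$-torsion points of the Jacobian become $\mathbb{F}_{q^m}$-rational.

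The one substantive difference is in how the Jacobian is linked to the ideal class group. The paper simply asserts a natural injection $J_{F'}\hookrightarrow \mathrm{Cl}_{F'}$, hence $J_{F'}[2]\hookrightarrow \mathrm{Cl}_{F'}[2]$, and concludes $\#\mathrm{Cl}_{F'}[2]\geq 2^{2g}$ outright. You instead (correctly) observe that $\mathrm{Cl}(\mathcal{O}_{F'})$ is a \emph{quotient} of $\mathrm{Pic}(C_{\mathbb{F}_{q^m}})$ by the subgroup generated by the classes of the infinite places, and argue that the resulting loss in $2$-torsion is bounded by data depending only on $[F:\mathbb{F}_q(t)]$, hence absorbed into the implied constant. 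Your version is arguably more careful: the injection the paper invokes is literally true only when there is a single place at infinity, whereas after base change that place may split. Either way the conclusion $\#\mathrm{Cl}_{F'}[2]\gg 2^{2g}$ follows, which is all that the $\gg$ in the theorem requires.
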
 

\begin{proof}
Let $J_F$ be the Jacobian of $F$.  There is a natural map $J_F \hookrightarrow \text{Cl}_F$ and so also an injective map on the two-torsion: $J_F[2] \hookrightarrow Cl_F[2].$ In particular we can use $\#J_F[2]$ as a (possibly crude) proxy for $\#\text{Cl}_F[2]$. Consider the multiplication by 2 endomorphism, $[2]$, on the $\bar{\mathbb{F}}_q$ points of $J_F$,
$$J_F(\bar{\mathbb{F}}_q)\overset{[2]}{\longrightarrow} J_F(\bar{\mathbb{F}}_q).$$
This is a surjective, degree $2^{2g}$ map. The two torsion of $J_F(\bar{\mathbb{F}}_q)$ is given by  $\ker([2])$. We're looking at $J_F$ over $\bar{\mathbb{F}}_q$ and we're concerned only with an extension of the field of constants to a larger finite field, but we have the relation $J_F(\bar{\mathbb{F}}_q)^{\text{Gal}(\bar{\mathbb{F}}_q/\mathbb{F}_q)}= J_F(\mathbb{F}_q)$. 

Notice that the $2^{2g}$ points in $J_F(\bar{\mathbb{F}}_q)[2]$ are partitioned into Galois orbits under the action of $\hat{\mathbb{Z}} \cong \text{Gal}(\bar{\mathbb{F}}_q/\mathbb{F}_q)$. So there is some positive integer $m$ such that all of $J_F(\bar{\mathbb{F}}_q)[2]$ is stable under the action of $m \hat{\mathbb{Z}} < \hat{\mathbb{Z}}$. These are exactly the points of $J_F(\mathbb{F}_{q^m})[2]$.

Let $F'$ be the base extension of $F$ having constants $\mathbb{F}_{q^m}$. This is a constant extension of function fields and so the genus of $F'$ is also $g$ \cite[Chapter 8]{Rosen}, thus passing from the lower bound given by Theorem \ref{MKff} to the lower bound under consideration only requires is to make the substitution of $2^{2g}$ for $\#\text{Cl}_{F'}[2]$ since we have just demonstrated that $2^{2g} \leq \#\text{Cl}_{F'}[2]$.

\end{proof}

Observe then that we need only pick a high enough genus curve with suitably large $q$ in order to skew the $D_4$-$S_4$ ratio arbitrarily high in favor of the number of $D_4$ extensions. 

\section{Typical Behavior of Quadratic Function Fields}

Throughout this section, let $P_n$ be the set of degree $n$ square-free polynomials with coefficients in $\mathbb{F}_q$ and let $\pi(d)$ be the number of irreducible degree $d$ polynomials over $\mathbb{F}_q$. Recall $\#P_n=q^n-q^{n-1}$. 

Indeed, for everything that follows, all polynomials will be taken to have coefficients in $\mathbb{F}_q$. In light of Theorem \ref{MKff}, our interest is in understanding hyperelliptic curves $y^2=f(x)$ over $\mathbb{F}_q$ which have many two torsion elements in the class groups of their corresponding function fields. Such elements correspond to factors of $f(x)$, and so we will settle for understanding what the typical number of irreducible factors is for a ``random'' $f(x)$. Many results of a similar flavor, particularly regarding mean and variance, due to Knopfmacher and Knopfmacher can be found in \cite{KK}.

In the following two propositions $T=T(n)$ will be a function of $n \in \N$ such that both $T(n) \rightarrow \infty$ and $n-T(n) \rightarrow \infty$ as $n \rightarrow \infty$  We will make a convenient choice of $T$ later. 

Our primary tool for understanding the typical number of irreducible factors is the following theorem due to Chebyshev, stated here in the context of a finite sets:
\begin{theorem}[Chebyshev's Inequality]\label{Cheby}
Let $X$ be a finite set and $f\colon X \rightarrow \C$. Then if $\phi$ has mean and variance given respectively by 
$$\mu := \frac{1}{\#X}\sum_{x \in X}\phi(x) \text{ and } \sigma^2 := \frac{1}{\#X}\sum_{x \in X}
(\phi(x)-\mu)^2,$$
then for any $k$,
$$\#\{x \in X \mid |\phi(x)-\mu|\geq k \sigma \} \leq \frac{\#X}{k^2}.$$
\end{theorem}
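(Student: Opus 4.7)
The plan is to prove this as the standard second-moment argument, treating the variance as a sum over $X$ and then restricting that sum to the ``bad'' set. Concretely, I would set $S := \{x \in X : |\phi(x) - \mu| \geq k\sigma\}$ and observe that by the definition of $\sigma^2$,
\[
\#X \cdot \sigma^2 = \sum_{x \in X} |\phi(x)-\mu|^2 \geq \sum_{x \in S} |\phi(x)-\mu|^2.
\]
Since every $x \in S$ contributes at least $k^2 \sigma^2$ to the right-hand sum by definition of $S$, I get $\#X \cdot \sigma^2 \geq \#S \cdot k^2 \sigma^2$.

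Next I would divide by $k^2\sigma^2$ to conclude $\#S \leq \#X / k^2$, which is the claimed bound. Two small points need to be handled: first, since $\phi$ is complex-valued, $|\phi(x)-\mu|^2 = (\phi(x)-\mu)\overline{(\phi(x)-\mu)}$, so the definition of $\sigma^2$ in the statement should be interpreted (or rewritten) as $\sigma^2 = \frac{1}{\#X}\sum |\phi(x)-\mu|^2$; this is the natural convention and makes the estimate above an equality rather than requiring any inequality in passing from $(\phi(x)-\mu)^2$ to $|\phi(x)-\mu|^2$. Second, the edge case $\sigma = 0$ has to be dispatched separately: if $\sigma = 0$ then $\phi$ is the constant $\mu$, so $S$ is empty for any $k > 0$ and the inequality holds trivially; the case $k = 0$ gives the trivial bound $\#S \leq \#X$ directly.

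There is no real obstacle in this argument, as it is just Markov's inequality applied to the nonnegative random variable $|\phi - \mu|^2$ on the uniform probability space $X$. The only thing worth flagging is that the statement as written uses the letter $f$ in the hypothesis but $\phi$ in the conclusion; I would silently unify the notation (using $\phi$ throughout) in the proof.
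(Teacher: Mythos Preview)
Your argument is the standard and correct second-moment proof of Chebyshev's inequality; there is no gap. Note, however, that the paper does not give its own proof of this statement: it simply quotes the theorem as a classical result due to Chebyshev and then applies it, so there is nothing to compare against. Your observations about the $f$/$\phi$ notational slip and the interpretation of $(\phi(x)-\mu)^2$ as $|\phi(x)-\mu|^2$ for complex-valued $\phi$ are accurate but inessential, since in the paper's applications $\phi = \omega_T$ is integer-valued.
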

For our considerations the $X$ of Theorem \ref{Cheby} will be $P_n$, and the $\phi$ will be the function counting the number of irreducible divisors of all $f \in P_n$ with degree bounded by $T$. As such, let $\omega_T(f)$ be the the number of irreducible divisors of $f$ with degree bounded by $T$.

\begin{prop}[``Expected Number'' of Factors]\label{mean} The expected number of irreducible factors with degree bounded by $T$ of polynomials in $P_n$ is
$$\mu \coloneqq \frac{1}{\#P_n}\sum_{f \in P_n} \omega_T(f) = \sum_{d\leq T} \frac{\pi(d)}{q^d+1} + O\left( q^{2T-n+1}\right).$$
\end{prop}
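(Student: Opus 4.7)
The plan is to evaluate $\sum_{f \in P_n}\omega_T(f)$ by interchanging the order of summation over $f$ and over its irreducible divisors:
$$\sum_{f \in P_n}\omega_T(f) = \sum_{d \leq T}\ \sum_{\substack{Q \text{ irreducible} \\ \deg Q = d}} \#\{f \in P_n : Q \mid f\}.$$
The inner cardinality depends only on $d$ and not on the particular $Q$; call it $B_{n,d}$. Then the right-hand side equals $\sum_{d \leq T}\pi(d)\,B_{n,d}$, and the task is reduced to understanding $B_{n,d}$.

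The second step is to evaluate $B_{n,d}$. The map $f \mapsto f/Q$ gives a bijection between square-free $f \in P_n$ divisible by $Q$ and square-free monic polynomials $g$ of degree $n-d$ coprime to $Q$ (square-freeness of $Qg$ forces $Q \nmid g$). By inclusion--exclusion this yields the recursion $B_{n,d} = A_{n-d} - B_{n-d,d}$ with $A_m := \#P_m$, which iterates to the alternating sum
$$B_{n,d} = \sum_{k \geq 1}(-1)^{k+1} A_{n-kd},$$
with the convention $A_j := 0$ for $j < 0$. The same identity also falls out of the generating-function identity $\sum_n B_{n,d}u^n = u^d(1-qu^2)/[(1-qu)(1+u^d)]$, obtained by removing the Euler factor at $Q$ from the square-free zeta function $(1-qu^2)/(1-qu)$.

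Using $A_m = q^m - q^{m-1}$ (valid for $m \geq 2$), summing the formal infinite geometric series gives the clean main term $A_n/(q^d+1)$. The truncation at $k = \lfloor n/d\rfloor$ and the discrepancies at $A_0 = 1$ and $A_1 = q$ together contribute an absolute correction of size $O(q^d)$; equivalently, the generating function has a single pole at $u = 1/q$ producing the main term, with the remaining poles on $|u|=1$ contributing only $O(q^d)$ to the coefficient of $u^n$. In either viewpoint,
$$B_{n,d} = \frac{A_n}{q^d+1} + O(q^d).$$

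Finally, dividing by $A_n \geq (q-1)q^{n-1}$ and summing over $d \leq T$ yields the main term $\sum_{d \leq T}\pi(d)/(q^d+1)$. Using $\pi(d) \leq q^d/d$, the total error is bounded by
$$\frac{1}{A_n}\sum_{d \leq T}\pi(d)\cdot O(q^d) \ll \frac{1}{q^{n-1}}\sum_{d \leq T}\frac{q^{2d}}{d} \ll q^{2T-n+1},$$
since the geometric sum in $d$ is dominated by its top term. The main technical point is the alternating-series bookkeeping in the third step, but the geometric decay of $A_{n-kd}$ in $k$ makes the required tail estimate routine, so no serious obstacle remains.
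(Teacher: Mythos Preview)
Your proof is correct and follows essentially the same approach as the paper: interchange the order of summation, set up the same recursion $B_{n,d}=\#P_{n-d}-B_{n-d,d}$, iterate to an alternating sum, extract the main term $\#P_n/(q^d+1)$ with an $O(q^d)$ remainder, and then sum over $d\leq T$ using $\pi(d)\leq q^d/d$ to reach the stated error. The only addition is your parenthetical generating-function viewpoint, which is a pleasant alternative packaging of the same alternating-sum computation but not a genuinely different route.
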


\begin{proof}
The mean number of irreducible factors with degree bounded by $T$ of polynomials ranging over $P_n$ is given by 
\begin{equation}\label{meandef}
\mu := \frac{1}{\#P_n} \sum_{f \in P_n} \sum_{\substack{p \text{ irred.}\\ p | f \\ \text{deg }p \leq T}} 1 = \frac{1}{\#P_n} \sum_{d \leq T} \sum_{\substack{p \text{ irred.} \\ \text{deg }p = d}} \sum_{\substack{f \in P_n\\ p | f }} 1
\end{equation}
 
First, for a fixed $d$ and irreducible $p$ of degree $d$, we aim to understand the innermost sum of (\ref{meandef}), i.e. $\sum_{\substack{f \in P_n\\ p | f }} 1$. This sum is counting the number of polynomials $f \in P_n$ which can be written as $f=gp$ for some $g \in P_{n-d}$. Since $f$ is square-free, we have $p \nmid g$. Now set
$$N_1:= \#\{f \in P_n \mid f=pg, g \in P_{n-d}, p \nmid g\}.$$
Computing $N_1$ is the same as understanding the number of $g$ with the given condition. So similarly, set  
$$N_2 := \#\{g \in P_{n-d} \mid g=ph, h \in P_{n-2d}, p \nmid h\}$$ 
and observe
$$N_1 = \#P_{n-d} - N_2.$$
One can inductively continue this procedure, if 
$$N_s = \#\{t \in P_{n-(s-1)d}\mid t=pu, u \in P_{n-sd}, p \nmid u \}$$
then $N_s = \#P_{n-sd} - N_{s+1}.$ The definition of $N_s$ is only sensical for $s+1$ up to $\lfloor \frac{n}{d}\rfloor$. One then finds that 
\begin{equation} \label{N1}
N_1 = \left(\sum_{k=1}^{\lfloor \frac{n}{d}\rfloor -1}(-1)^{k+1} \#P_{n-kd}\right) \pm N_{\lfloor \frac{n}{d}\rfloor}.
\end{equation}
We can bound $N_{\lfloor \frac{n}{d}\rfloor}$ trivially by $P_{n-\lfloor\frac{n}{d}\rfloor d}$. We have  $n-\lfloor\frac{n}{d}\rfloor d \leq d$ so, at worst $N_{\lfloor \frac{n}{d}\rfloor} = O(q^d)$. When we compute the mean and divide through by $\#P_n$, this error will be negligible. 

Beginning from \eqref{N1}, we find

\begin{align*}
N_1 = \left(\sum_{k=1}^{\lfloor \frac{n}{d}\rfloor -1}(-1)^{k+1} \#P_{n-kd}\right) + O(q^d) &=  \left(\sum_{k=1}^{\lfloor \frac{n}{d}\rfloor -1}(-1)^{k+1} (q^{n-kd}-q^{n-kd-1})\right) + O(q^d)\\
&=  (q^n-q^{n-1})\left(\sum_{k=1}^{\lfloor \frac{n}{d}\rfloor -1}(-1)^{k+1} q^{-kd}\right) + O(q^d) \\
&=(q^n-q^{n-1})\left(\frac{1}{q^d+1} + O(q^{-n-1})\right) + O(q^d) \\
&=(q^n-q^{n-1})\frac{1}{q^d+1} + O(q^d) 
\end{align*}

Substituting this last expression into \eqref{meandef} yields, 
\begin{align*}
\mu  &= \frac{1}{\#P_n} \sum_{d \leq T} \sum_{\substack{p \text{ irred.} \\ \text{deg }p = d}} \sum_{\substack{f \in P_n\\ p | f }} 1 \\
&= \sum_{d \leq T} \sum_{\substack{p \text{ irred.} \\ \text{deg }p = d}} \left( \frac{1}{q^d+1} + O(q^{d-n+1})\right) \\
&= \sum_{d\leq T} \pi(d)\left( \frac{1}{q^d+1} + O(q^{d-n+1})\right).
\end{align*}
Finally, using the prime number theorem for irreducible polynomials over $\mathbb{F}_q$, the above becomes 
$$\sum_{d\leq T} \frac{\pi(d)}{q^d+1} + O\left( q^{2T-n+1}\right)$$
as desired.
\end{proof}

\begin{prop}[The ``variance'' in the number of irreducible factors]\label{variance} The variance in the number of irreducible factors with degree bounded by $T$ of polynomials in $P_n$ is
$$\sigma^2 \coloneqq \frac{1}{\#P_n}\sum_{f \in P_n} (\omega_T(f) - \mu)^2 = \sum_{d} \pi(d)\frac{1}{q^d+1}\left(1-\frac{1}{q^d+1}\right) + O\left(q^{2(T-n+1)}\right).$$
\end{prop}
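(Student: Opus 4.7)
The plan is to use the variance decomposition
\[
\sigma^2 = \frac{1}{\#P_n}\sum_{f \in P_n}\omega_T(f)^2 - \mu^2
\]
and compute the second moment via a two-prime analogue of the inclusion-exclusion from Proposition \ref{mean}. Writing $\omega_T(f) = \sum_{\deg p \leq T}\id_{p\mid f}$ and using $\id_{p\mid f}^2 = \id_{p\mid f}$ yields
\[
\omega_T(f)^2 = \omega_T(f) + \sum_{\substack{p\neq p'\\ \deg p, \deg p' \leq T}} \id_{pp'\mid f},
\]
so that computing the average of $\omega_T^2$ reduces, beyond the already-handled linear term $\mu$, to estimating $N(p,p') := \#\{f \in P_n : pp'\mid f\}$ for distinct irreducibles $p,p'$ of degrees $d, d'$.

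I would handle $N(p,p')$ by the same inductive strategy used for Proposition \ref{mean}, now iterated in both $p$ and $p'$. The map $f \mapsto f/(pp')$ identifies the set in question with $\{g \in P_{n-d-d'} : \gcd(g,pp')=1\}$; applying the alternating recursion of Proposition \ref{mean} once in $p$ and once in $p'$ should yield
\[
N(p,p') = \frac{\#P_n}{(q^d+1)(q^{d'}+1)} + O(q^{d+d'}).
\]
Summing against the prime polynomial theorem and combining with Proposition \ref{mean} for the diagonal, one obtains
\[
\frac{1}{\#P_n}\sum_{f\in P_n}\omega_T(f)^2 = \mu + \sum_{\substack{p\neq p'\\ \deg p, \deg p' \leq T}} \frac{1}{(q^d+1)(q^{d'}+1)} + \text{error}.
\]

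Finally I would subtract $\mu^2$, using the identity
\[
\Bigl(\sum_{\deg p \leq T}\tfrac{1}{q^{\deg p}+1}\Bigr)^{\!2} = \sum_{p\neq p'}\tfrac{1}{(q^d+1)(q^{d'}+1)} + \sum_p \tfrac{1}{(q^d+1)^2},
\]
so that the off-diagonal double sums cancel between the second moment and $\mu^2$, leaving
\[
\sigma^2 = \sum_{d\leq T}\pi(d)\frac{1}{q^d+1}\Bigl(1 - \frac{1}{q^d+1}\Bigr) + \text{error}
\]
as desired. The main obstacle will be the error bookkeeping: the individual $O(q^{d+d'})$ errors in $N(p,p')$, summed against $\pi(d)\pi(d')$ and divided by $\#P_n$, would naively dominate the target $O(q^{2(T-n+1)})$, as would the square of the $O(q^{2T-n+1})$ error from Proposition \ref{mean}. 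The resolution is to exploit cancellation \emph{before} taking absolute values, estimating the covariance
\[
\frac{N(p,p')}{\#P_n} - \frac{\#\{f : p\mid f\}}{\#P_n}\cdot\frac{\#\{f : p'\mid f\}}{\#P_n}
\]
directly from the two inclusion-exclusion formulas, where the leading terms cancel and one is left with a quantity of size small enough to sum to the claimed error. This is the step that requires real care.
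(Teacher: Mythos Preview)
Your approach is essentially the paper's: it too uses $\sigma^2 = \mathbb{E}[\omega_T^2] - \mu^2$, expands the second moment as a sum over pairs $(p,p')$, estimates $N^{p,p'}_n$ by the same iterated inclusion--exclusion to get $\#P_n/((q^d+1)(q^{d'}+1)) + O(q^d)$, and then subtracts $\mu^2$ so that the off-diagonal pieces cancel; the only cosmetic difference is that you isolate the diagonal $p=p'$ at the outset via $\id_{p\mid f}^2=\id_{p\mid f}$, whereas the paper carries all pairs and splits into the three cases $d\neq d'$, $d=d'$ with $p\neq p'$, and $p=p'$ at the end. Your wariness about the error bookkeeping is justified---the paper's own treatment is casual here (e.g.\ an $O(q^{d-n+1})$ appears outside a sum over $d$), and it does not carry out the covariance-cancellation step you describe; this is harmless downstream, since Corollary~\ref{meanexp} only needs $\sigma^2\sim\log T$, for which a much cruder error suffices.
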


\begin{proof}
For a random variable $X$, the variance is given by $\mathbb{E}[X^2] -\mathbb{E}[X]^2$. One easily verifies the following analogue of that identity: 
\begin{equation}\label{variance}
\sigma^2 = \frac{1}{\#P_n} \sum_{f \in P_n} \left( \sum_{\substack{p \text{ irred.} \\ p\mid f \\ \text{deg }p \leq T}} 1 \right)^2 - \left(\frac{1}{\#P_n} \sum_{f \in P_n}  \sum_{\substack{p \text{ irred.} \\ p\mid f \\ \text{deg }p \leq T}} 1 \right)^2. 
\end{equation}
We'll compute the first term in the difference in \eqref{variance}:
\begin{equation}\label{EsqX}
\frac{1}{\#P_n} \sum_{f \in P_n} \left( \sum_{\substack{p \text{ irred.} \\ p\mid f \\ \text{deg }p \leq T}} 1 \right)^2 = \frac{1}{\#P_n} \sum_{f \in P_n}  \sum_{\substack{p,p' \text{ irred.} \\ p,p'\mid f \\ \text{deg }p,~ \text{deg } p' \leq T}} 1=\frac{1}{\#P_n} \sum_{d,d' \leq T}  \sum_{\substack{p,p' \text{ irred.} \\ \text{deg }p =d \\ \text{deg }p' = d'}} \sum_{\substack{f \in P_n \\ p,p' \mid f}} 1.
\end{equation} 

Let $N_n^{p_1,...,p_r}$ be size of the set of all polynomials $f \in P_n$ such that $p_1 | f, \hdots , p_r | f$. Note, for fixed irreducible polynomials $p$ and $p'$ of degrees $d$ and $d'$, respectively.  Set $N_n^{p_1,...,p_r} =\#P_n^{p_1,...,p_r}$ , then we have $N_n^{p,p'} = \sum_{\substack{f \in P_n \\ p,p' \mid f}} 1$, which is the innermost sum on the right size of \eqref{EsqX}. We have then that 
$$N^{p,p'}_n = N^p_{n-d'}-N^{p,p'}_{n-d'}$$
Proceeding in the same was as the proof of Proposition \ref{mean}, we get
\begin{align*}
N^{p,p'}_n = \sum_{k=1}^{\lfloor \frac{n}{d'} \rfloor} (-1)^{k+1}N^p_{n-kd'} + O(q^d). 
\end{align*}
Proposition \ref{mean} give us the size of the $P^p_{n-kd'}$, so 
\begin{align*}
N^{p,p'}_n = \sum_{k=1}^{\lfloor \frac{n}{d'} \rfloor} (-1)^{k+1} \#P_{n-kd'} \frac{1}{q^d+1} + O(q^d). 
\end{align*}
Letting $n \to \infty$ we evaluate the geometric series and find 
\begin{equation} \label{ppn}
N^{p,p'}_n = (q^n-q^{n-1})\frac{1}{q^d+1}\frac{1}{q^{d'}+1}+ O(q^d).
\end{equation}
Substituting \eqref{ppn} back into \eqref{EsqX} and rewriting, we get
\begin{equation}\label{e(x2)notsplit}
\frac{1}{\#P_n}\sum_{d,d' \leq T}  \sum_{\substack{p,p' \text{ irred.} \\ \text{deg }p =d \\ \text{deg }p' = d'}} \sum_{\substack{f \in P_n \\ p,p' \mid f}} 1 = \sum_{d,d' \leq T}  \sum_{\substack{p,p' \text{ irred.} \\ \text{deg }p =d \\ \text{deg }p' = d'}} \frac{1}{q^d+1}\frac{1}{q^{d'}+1}+ \textcolor{black}{O(q^{d-n+1}).}
\end{equation}
We'll break the sum in \eqref{e(x2)notsplit} up into three cases depending on if $d=d'$ or not as follows:
\begin{equation}\label{e(x2)split1}
 \sum_{d\neq d' \leq T}  \sum_{\substack{p,p' \text{ irred.} \\ \text{deg }p =d \\ \text{deg }p' = d'}} \frac{1}{q^d+1}\frac{1}{q^{d'}+1} + \sum_{d = d' \leq T}  \sum_{\substack{p \neq p' \text{ irred.} \\ \text{deg }p =d \\ \text{deg }p' = d'}} \frac{1}{q^d+1}\frac{1}{q^{d'}+1}+ \sum_{d=d'\leq T}\sum_{\substack{p = p' \text{ irred.} \\ \text{deg }p =d}}\frac{1}{q^d+1}+\textcolor{black}{O(q^{d-n+1})}
\end{equation}
Notice in the above equation, the summands have no dependence on the irreducible polynomials  $p$ or $p'$ and so we these terms may be pulled out and counted as $\pi(d)$ or $\pi(d')$. 
Then, evaluating each of the three expressions, \eqref{e(x2)split1} is equal to 
\begin{equation} 
\sum_{d\neq d' \leq T} \pi(d)\pi(d') \frac{1}{q^d+1}\frac{1}{q^{d'}+1} + \sum_{d=d' \leq T} \left( \pi(d)^2 -\pi(d)\right)\left(\frac{1}{q^d+1} \right)^2 + \sum_{d=d'\leq T} \pi(d)\frac{1}{q^d+1}+\textcolor{black}{O(q^{T-n+1})}. \label{e(x^2)expanded}
\end{equation}
We'll now address the second term of (\ref{variance}).  We have, using Proposition \ref{mean}, that
\begin{align}
\notag \mu^2 &= \left(\frac{1}{\#P_n} \sum_{f \in P_n}  \sum_{\substack{p \text{ irred.} \\ p\mid f \\ \text{deg }p \leq T}} 1 \right)^2 \\
\notag &= \left(\sum_{d \leq T} \pi(d)\frac{1}{q^d+1} + O(q^{T-n+1}) \right)^2 \\
\notag &= \sum_{d,d' \leq T} \pi(d)\pi(d')\frac{1}{q^d+1}\frac{1}{q^{d'}+1} + O\left(q^{2(T-n+1)}\right) \\
&= \sum_{d = d'\leq T} \pi(d)^2 \left(\frac{1}{q^d+1} \right)^2 + \sum_{d \neq d' \leq T} \pi(d) \pi(d')\frac{1}{q^d+1}\frac{1}{q^{d'}+1} + O\left(q^{2(T-n+1)}\right) \label{e(x)^2expanded}
\end{align}

Finally, taking the difference of \eqref{e(x^2)expanded} and \eqref{e(x)^2expanded} to get $\sigma^2$ as in \eqref{variance}, we obtain
$$\sigma^2  =  \sum_{d} \pi(d)\frac{1}{q^d+1}\left(1-\frac{1}{q^d+1}\right) + O\left(q^{2(T-n+1)}\right)$$
as desired. 
\end{proof} 

\begin{cor}\label{meanexp} With the notation as above, and as $n \to \infty$, we have 
$$\mu \sim \log T$$
and 
$$\sigma^2 \sim \log T.$$
\end{cor}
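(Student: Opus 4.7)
The plan is to invoke the prime number theorem for polynomials over $\F_q$, namely $\pi(d) = q^d/d + O(q^{d/2}/d)$, and reduce the expressions from Propositions \ref{mean} and \ref{variance} to harmonic-type sums.

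For $\mu$, I would begin by writing
\[
\frac{\pi(d)}{q^d+1} = \frac{1}{d}\cdot\frac{q^d}{q^d+1} + O\!\left(\frac{1}{d\,q^{d/2}}\right) = \frac{1}{d} - \frac{1}{d(q^d+1)} + O\!\left(\frac{1}{d\,q^{d/2}}\right).
\]
Summing over $d \leq T$, both secondary terms yield series that converge absolutely as $T \to \infty$, so they contribute only $O(1)$. The principal part $\sum_{d \leq T} 1/d = \log T + \gamma + o(1)$ then supplies the $\log T$ asymptotic. Provided $T = T(n)$ is chosen so that the error $O(q^{2T-n+1})$ from Proposition \ref{mean} is $o(\log T)$ — any $T = o(n)$, for instance $T$ of order $\log n$, suffices — we conclude $\mu \sim \log T$.

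For $\sigma^2$, the argument is essentially the same. The summand $\frac{\pi(d)}{q^d+1}\bigl(1-\frac{1}{q^d+1}\bigr)$ differs from $1/d$ by quantities whose sums over $d$ converge absolutely, so the main term of $\sigma^2$ given in Proposition \ref{variance} is $\log T + O(1) \sim \log T$. The error $O(q^{2(T-n+1)})$ is again negligible under the same mild restriction on $T$.

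The only real obstacle is ensuring that the choice of $T(n)$ is compatible with making all the error terms genuinely $o(\log T)$; but since the standing hypothesis only requires $T, n-T \to \infty$, any further restriction such as $T = o(n^{1/2})$ is harmless and forces each error to decay to zero while $\log T \to \infty$. With this in place, both asymptotics follow immediately from the harmonic sum $\sum_{d \leq T} 1/d \sim \log T$.
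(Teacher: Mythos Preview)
Your proof is correct and follows essentially the same route as the paper: apply the prime polynomial estimate $\pi(d)=q^d/d+O(q^{d/2}/d)$, rewrite $\pi(d)/(q^d+1)$ as $1/d$ plus absolutely convergent corrections, and invoke the harmonic sum $\sum_{d\le T}1/d\sim\log T$; the paper treats $\sigma^2$ with the same one-line remark you do. Your discussion of the $O(q^{2T-n+1})$ error is actually more careful than the paper's (which simply drops it), though you need not restrict to $T=o(n)$ --- the application in the paper takes $T=n/2$, where the error is the constant $O(q)$ and hence still $o(\log T)$.
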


\begin{proof}
Starting with the conclusion of Proposition \ref{mean}, we have as $n \to \infty$ that
\begin{equation} \label{mu}
\mu \sim \sum_{d \leq T} \pi(d)\frac{1}{q^d + 1}.
\end{equation}

It is known, see e.g. \cite{Rosen}, that 
\begin{equation}\label{pnt}
\pi(d) = \frac{q^d}{d}+O\left(\frac{q^{d/2}}{d}\right)
\end{equation}
From (\ref{mu}) and (\ref{pnt}) one obtains 
\begin{equation} \label{tog}
\sum_{d \leq T} \pi(d)\frac{1}{q^d + 1} = \sum_{d < T} \left( \frac{q^d}{d(q^d+1)}+O(q^{-d/2})\right) = \sum_{d<T} \frac{q^d}{d(q^d+1)} + O\left(q^{-1/2}\right)
\end{equation}
Now we compute the sum on the right side of \eqref{tog}, 

\begin{align}
\notag \sum_{d<T} \frac{q^d}{d(q^d+1)}  &= \sum_{d<T}\frac{1}{d} - \sum_{d<T}\frac{1}{d}\left(\frac{1}{q^d+1}\right) \\
\notag &= \log T + O(1/T) - \sum_{d<T}\frac{1}{d}\left(\frac{1}{q^d+1}\right) \\
& = \log T + O\left( \frac{1}{T} \right)
\end{align}
Where the final equality comes by integrating the last sum by parts and trivially bounding the integrand by $1/q^t$. The result follows. The analysis for $\sigma^2$ is essentially the same.
\end{proof}

\begin{theorem}\label{irredfacts}
As $n \to \infty$,  all but a proportion of $\frac{1}{\log{\frac{n}{2}}}$ square-free polynomials $f \in P_n \subset \mathbb{F}_q[x]$ are such that $f$ has at least $\log \frac{n}{2} + O\left(\sqrt{\log \frac n 2}\right)$ irreducible factors. 
\end{theorem}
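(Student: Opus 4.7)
The strategy is a direct application of Chebyshev's inequality (Theorem \ref{Cheby}) to the function $\omega_T$ on $X = P_n$, using the mean and variance estimates from Propositions \ref{mean} and \ref{variance} and Corollary \ref{meanexp}.

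First I would fix the cutoff $T = T(n) := \lfloor n/2 \rfloor$, which satisfies both $T(n) \to \infty$ and $n - T(n) \to \infty$ and keeps the error terms $O(q^{2T-n+1})$ and $O(q^{2(T-n+1)})$ of Propositions \ref{mean} and \ref{variance} bounded in $n$. Under this choice Corollary \ref{meanexp} sharpens to $\mu = \log(n/2) + O(1)$ and $\sigma^2 = \log(n/2) + O(1)$, and consequently $\sigma = \sqrt{\log(n/2)}\,(1+o(1))$.

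Next I would apply Chebyshev's inequality with $k = \sqrt{\log(n/2)}$. This gives
\[
\#\{f \in P_n : |\omega_T(f) - \mu| \geq k\sigma\} \leq \frac{\#P_n}{k^2} = \frac{\#P_n}{\log(n/2)},
\]
which is the asserted exceptional proportion $1/\log(n/2)$. For every non-exceptional $f$ we then have $\omega_T(f) > \mu - k\sigma$. Expanding $\mu - k\sigma$ using the refined asymptotics for $\mu$ and $\sigma$ produces a lower bound of the form $\log(n/2) + O(\sqrt{\log(n/2)})$. Finally, since every irreducible factor of degree $\leq T$ is in particular an irreducible factor of $f$, we have $\omega(f) \geq \omega_T(f)$, and the lower bound transfers to the total factor count, giving the theorem.

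The main obstacle will be the bookkeeping of subleading terms in the computation of $\mu - k\sigma$: with $k = \sqrt{\log(n/2)}$ the summands $\mu$ and $k\sigma$ are both of leading order $\log(n/2)$, so their difference involves a cancellation of the main terms. Verifying that the residual matches the promised $O(\sqrt{\log(n/2)})$ window (and not something larger) requires the $O(1)$-style control provided by Corollary \ref{meanexp} rather than the cruder $\sim$-asymptotic, and may call for a slight constant-factor adjustment in the choice of $k$.
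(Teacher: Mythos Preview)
Your proposal is essentially identical to the paper's proof: the paper also takes $X=P_n$, $\phi=\omega_T$ with $T=n/2$, sets $k=\sqrt{\log(n/2)}$, and invokes Chebyshev's inequality together with Corollary~\ref{meanexp}. The bookkeeping concern you flag at the end---that with this choice of $k$ the quantities $\mu$ and $k\sigma$ are both of leading order $\log(n/2)$---is real and is left unaddressed in the paper's own argument as well; the paper simply records the Chebyshev exceptional set $\{f:|\omega_{n/2}(f)-\log\frac n2|\ge\log\frac n2\}$ and stops, so your write-up is in fact slightly more careful than the original.
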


\begin{proof}
Set $k= \sqrt{\log \frac n 2}$. Now apply Theorem \ref{Cheby} with $X=P_n$, $\phi=\omega_T$, and with $\mu$ and $\sigma^2$ given by Corollary \ref{meanexp} setting $T=\frac n 2$. One finds, 
$$\frac{\#\{f\in P_n \mid ~ |\omega_{\frac n 2}(f) - \log \frac n 2| \geq \log \frac n 2\}}{\#P_n} \leq \frac{1}{\log \frac n 2}.$$
Consequently, 
$$\lim_{n \to \infty}\frac{\#\{f\in P_n \mid ~ |\omega_{\frac n 2}(f) - \log \frac n 2| \leq \log \frac n 2\}}{q^n - q^{n-1}} \geq \lim_{n \to \infty} 1 - \frac{1}{\log \frac n 2} = 1. $$
\end{proof}

\begin{proof}[Proof of Theorem \ref{100per}]
We apply Theorem \ref{Cheby} with  $X=P_d$, $\phi=\omega_T$, $T=\frac{d}{2}$, and $k = \alpha \sqrt{\log{\frac{d}{2}}}$ where $\alpha = \beta + O\left(\frac{1}{d \sqrt{ \log \frac d 2}}\right)$.

For the parameter $\alpha$, we'll make a choice of $\beta$ and a choice of error term in the course of the proof. We essentially mimic the proof of Theorem \ref{irredfacts} and then apply Theorem \ref{MKff}.

With $\mu$ and $\sigma$ as above, an immediate consequence of Theorem \ref{Cheby} is that for a random $f(x) \in P_d$, 
\begin{equation}\label{chebyin}
    \omega_{d/2}(f) \geq \mu - k\sigma 
\end{equation}
with probability at least $1-\frac{1}{k^2}$. If \eqref{chebyin} holds, Corollary \ref{meanexp} implies 
\begin{equation}\label{chebyin2}
    \omega_{d/2}(f) \geq (1-\alpha)\log\frac{d}{2} + O\left( \frac{\sqrt{\log \frac{d}{2}}}{d}\right).
\end{equation}
Pick $\alpha$ as follows: pick the error term which is the negative of the implicit error appearing in \eqref{chebyin2} and pick any $0 < \beta < 1$. Note that the error terms we're countering with this choice, coming from Corollary \ref{meanexp}, depend only on $d$, not on $f(x)$. 

For convenience, pick $\beta = 1/2$. We have then that a proportion of least $1-\frac{1}{k^2}$ of $f(x) \in P_d$ have at least $\beta \log \frac n 2$ irreducible factors. 

Let $F$ be the quadratic extensions of $\mathbb{F}_q(x)$ corresponding to a hyperelliptic curve $y^2=f(x)$ where $f(x) \in P_d \subset \mathbb{F}_q[x]$. Note $d = \deg f(x)=2g+1$ or $2g+2$ where $g$ is the genus of $F$. The discussion above shows that as $f(x)$ ranges through $P_d$, a proportion of  $1-\frac{1}{k^2}=1-O\left(\frac{1}{\log g}\right)$ of the associated $F$, are such that $\text{Cl}_F[2] = 2^{\omega(f)} > 2^{\beta \log g}$ where $\omega(f)$ is the number of irreducible factors of $f(x)$. 

So, for a proportion of $1-O\left(\frac{1}{\log g}\right)$ hyperelliptic genus $g$ extensions $F/\mathbb{F}_q(x)$, we have
\begin{align*}
    \lim_{n \to \infty}\frac{N_4^F(q^{2n};D_4)}{N_4^F(q^{2n};S_4)} &\gg \#\rm{Cl}_F[2]\left(1-\frac{1}{\sqrt{q}}\right)^{4g-2} \\
    &=2^{\omega_T(f)} \left(1-\frac{1}{\sqrt{q}}\right)^{4g-2} \\
    &\geq 2^{\beta \log g} \left(1-\frac{1}{\sqrt{q}}\right)^{4g-2} \\
    &= g^{\beta \log 2}\left(1-\frac{1}{\sqrt{q}}\right)^{4g-2},
\end{align*}
proving the theorem.
\end{proof}
For completeness, note that in the proof above any choice $0 < \beta < 1$ will do. One can thus slightly improve the exponent of $g$ stated in Theorem \ref{100per}. 

Finally, note that taking $q \to \infty$ and large $g$ in Theorem \ref{100per} is essentially an extremal version of \cite[Corollary 1.2]{FK} but for function fields rather than number fields. It is plausible that the $L$-function techniques used in the number field version could be ported over to the function field setting in order to ease the condition on $q$. 

\section{Acknowledgements}
 Many thanks to Robert Lemke Oliver for numerous helpful discussions and suggestions. Thanks also to Shamil Asgarli, Matt Friedrichsen, and George McNinch for helpful conversations.
\newpage

\begin{bibdiv}
\begin{biblist}

\bib{BSW}{article}{
	author = {Bhargava, Manjul},
	author = {Shankar, Arul},
	author = {Wang, Xiaoheng},
	title = {Geometry-of-numbers methods over global fields I: Prehomogeneous vector spaces},
	journal = {arXiv:1512.03035 [math.NT]},
	date = {2017}
}

\bib{CDO}{article}{
	author = {Cohen, Henri},
	author = {Diaz y Diaz, Francisco},
	author = {Olivier, Michel},
	title = {Enumerating Quartic Dihedral Extensions of $\Q$},
	journal = {Composito Mathematica},
	volume = {133},
	pages = {65-93},
	date = {2002}
}

\bib{FK}{article}{
	author = {Friedrichsen, Mathew},
	author = {Keliher, Daniel},
	title = {Comparing the density of $D_4$ and $S_4$ quartic extensions of number fields},
	journal = {arXiv:1910.06388 [math.NT]},
	date = {2019}
}

\bib{JP}{article}{
	author = {Jarden, Moshe},
	author = {Prasad, Gopal},
	title = {Appendix: The discriminant quotient formula for global fields},
	journal = {Publications math\'ematiques de l'I.H.\'E.S},
	volume = {69},
	pages = {115-117},
	date = {1989}
}

\bib{KK}{article}{
	author = {Knopfmacher, Arnold},
	author = {Knopfmacher, John},
	title = {Counting irreducible factors of polynomials over a finite field},
	journal = {Discrete Mathematics},
	date = {1993},
	pages = {103-118},
	volume = {112}
}

\bib{Rosen}{book}{
	author = {Rosen, Michael},
	title = {Number Theory in Function Fields},
	publisher = {Springer Books},
	date = {2001}
}

\bib{Pappalardi}{article}{
	author = {Pappalardi, Francesco},
	title = {A survey on $k$-power freeness},
	journal ={Number theory, Ramanujan Math. Soc. Lect. Notes Ser., vol. 1},
	publisher = {Ramanujan Math. Soc., Mysore},
	pages = {71-88},
	date = {2005}
}


\bib{W}{article}{
	author = {Wright, David J.},
	title = {Distribution of Discriminants of Abelian Extensions},
	journal = {Proc. London Math. Soc.},
	volume = {(3) 58},
	pages = {17-50},
	date = {1989}
}

\end{biblist}
\end{bibdiv}

\end{document}